\newtheorem{theorem}{Theorem}[section]
\newtheorem{proposition}[theorem]{Proposition}
\newtheorem{remark}[theorem]{Remark}
\newcommand{\eps}{\epsilon}
 \title{On nearly Sasakian and nearly cosymplectic manifolds}
\author[A. De Nicola]{Antonio De Nicola}
 \address{Dipartimento di Matematica, Universit\`a degli Studi di Salerno, Via Giovanni Paolo II 132, 84084 Fisciano, Italy}
 \email{antondenicola@gmail.com}
\author[G. Dileo]{Giulia Dileo}
 \address{Dipartimento di Matematica, Universit\`a degli Studi di
 Bari Aldo Moro, Via E. Orabona 4, 70125 Bari, Italy}
 \email{giulia.dileo@uniba.it}
\author[I. Yudin]{Ivan Yudin}
 \address{CMUC, Department of Mathematics, University of Coimbra, 3001-501 Coimbra, Portugal}
 \email{yudin@mat.uc.pt}
\subjclass[2000]{Primary 53C25, 53D35}
\thanks{This work was partially supported by CMUC -- UID/MAT/00324/2013, funded by the Portuguese
 Government through FCT/MEC and co-funded by the European Regional Development Fund through the Partnership Agreement PT2020 (A.D.N. and I.Y.),
 by MICINN (Spain) grants MTM2012-34478 (A.D.N.),
and by the exploratory research project in the frame of Programa Investigador FCT IF/00016/2013 (I.Y.).
G. Dileo thanks the Centre for Mathematics of the University of Coimbra for the hospitality received.}
\begin{document}

\begin{abstract}
We prove that every nearly Sasakian manifold of dimension greater than five is Sasakian. This provides a new criterion for
an almost contact metric manifold to be Sasakian. Moreover, we classify nearly cosymplectic manifolds of dimension greater than five.
\end{abstract}

\maketitle
\section{Introduction}
One of the most successful attempts to relax the definition of a K\"{a}hler manifold is provided by the notion of a nearly K\"{a}hler manifold.
Namely, nearly K\"{a}hler manifolds are defined as almost Hermitian manifolds $(M,J,g)$ such that the covariant derivative of the almost complex structure with respect to the Levi-Civita connection is skew-symmetric, that is
\begin{equation*}
(\nabla_{X}J)X = 0,
\end{equation*}
for every vector field $X$ on $M$. A remarkable classification of nearly K\"ahler manifolds was obtained by Nagy in \cite{Nagy}. This result reveals how $6$-dimensional nearly K\"ahler manifolds play a central role, appearing as one of the possible factors in the de Rham decomposition of a complete simply connected strict nearly K\"ahler manifold.

Notice that in the defining condition of a nearly K\"ahler manifold, only the symmetric part of $\nabla J$ vanishes, in contrast to the K\"{a}hler case where $\nabla J=0$.
Nearly Sasakian and nearly cosymplectic manifolds were defined in the same spirit starting from Sasakian and coK\"{a}hler (sometimes also called cosymplectic)
manifolds, respectively.

A smooth manifold $M$ endowed with an almost contact metric structure $(\phi,\xi,\eta,g)$ is said to be nearly Sasakian if
\begin{equation}\label{defns}
(\nabla_X\phi)X=g(X,X)\xi-\eta(X)X,
\end{equation}
for every vector field $X$ on $M$.
Similarly, the condition for $M$ to be nearly cosymplectic is given by
\begin{equation}\label{defncosy}
 (\nabla_X\phi)X=0,
\end{equation}
for every vector field $X$ on $M$.

The notion of a nearly Sasakian manifold was introduced by Blair and his collaborators in \cite{BlairSY}, while  nearly cosymplectic manifolds were studied by Blair and Showers in \cite{BLAIR_cos,BLAIR-cos2}.
In the subsequent literature on the topic, quite important were the papers of Olszak \cite{Olszak, Olszak1} for nearly Sasakian manifolds
and those of Endo \cite{E0,E} on nearly cosymplectic manifolds.
Later on, these two classes have played a role in the Chinea-Gonzalez's classification  of almost contact metric manifolds (\cite{Chinea}). They also appeared in the study of harmonic almost contact structures (cf. \cite{Gonzalez}, \cite{Vergara1}).
In~\cite{Vergara2}, Loubeau and Vergara-Diaz proved  that a nearly cosymplectic structure, once  identified with a section of a twistor bundle, always defines a harmonic map.

Recently,  a systematic study of nearly Sasakian and nearly cosymplectic manifolds was carried forward in  \cite{CappDileo}. In that paper, the authors proved that any nearly Sasakian manifold is a contact manifold. In the 5-dimensional case, they showed that any nearly Sasakian manifold admits a nearly hypo $SU(2)$-structure that can be deformed to give a Sasaki-Einstein structure. Moreover, they proved that any  nearly Sasakian manifold of dimension 5 has an associated nearly cosymplectic structure, thereby showing the close relation between these two notions. For 5-dimensional nearly cosymplectic manifolds, they proved that any such manifold  is Einstein with positive scalar curvature.
It is also worth remarking that ($1$-parameter families of) examples of both nearly Sasakian and nearly cosymplectic structures are provided by every $5$-dimensional manifold endowed with a Sasaki-Einstein $SU(2)$-structure.

While Sasakian manifolds are characterized by the equality
\begin{gather*}
(\nabla_{X}\phi)Y=g(X,Y)\xi-\eta(Y)X,
\end{gather*}
the defining condition \eqref{defns} of a nearly Sasakian manifold gives a constraint only on the symmetric part of $\nabla\phi$. In this paper we show that, surprisingly, in dimension higher than five, condition  \eqref{defns} is enough for the manifold to be Sasakian.

Concerning nearly cosymplectic manifolds, we prove that a  nearly cosymplectic non-coK\"ahler manifold $M$ of dimension $2n+1>5$  is locally isometric to one of the following Riemannian products:
\[\mathbb{R}\times N^{2n}, \qquad M^5\times N^{2n-4},\]
where $N^{2n}$ is a nearly K\"ahler non-K\"ahler manifold,  $N^{2n-4}$ is a nearly K\"ahler manifold, and $M^5$ is a nearly cosymplectic non-coK\"ahler manifold. If one makes the further assumption that the manifold
is complete and simply connected, then the isometry becomes global.

\section{Definitions and known results}

An almost contact metric manifold is a differentiable manifold $M$ of odd dimension $2n+1$, endowed with a
structure $(\phi, \xi, \eta, g)$, given by a tensor field $\phi$ of type $(1,1)$, a
vector field $\xi$, a $1$-form $\eta$ and a Riemannian metric
$g$ satisfying
\[\phi^2={}-I+\eta\otimes\xi,\quad \eta(\xi)=1,\quad g(\phi X,\phi Y)=g(X,Y)-\eta(X)\eta(Y)\]
for all vector fields $X,Y$ on $M$ (see \cite{blair,galicki} for further details). From the definition it follows that  $\phi\xi=0$ and $\eta\circ\phi=0$. Moreover, $\phi$ is skew-symmetric with respect to $g$, so that the bilinear form $\Phi:=g(-,\phi-)$ defines a $2$-form on $M$, called \emph{fundamental $2$-form}.
An almost contact metric manifold such that $d\eta=2\Phi$ is called a \emph{contact metric manifold}. In this case $\eta$ is a \emph{contact form}, i.e. $\eta\wedge (d\eta)^n\ne0$ everywhere on $M$.

A \emph{Sasakian manifold} is defined as a contact metric manifold such that the tensor field $N_{\phi}:=[\phi,\phi]+d\eta\otimes\xi$ vanishes identically. It is well known that an almost contact metric manifold is Sasakian if and only if the Levi-Civita connection satisfies:
\begin{gather}\label{sasakian}
(\nabla_{X}\phi)Y=g(X,Y)\xi-\eta(Y)X.
\end{gather}

A \emph{nearly Sasakian manifold } is an almost contact metric manifold $(M,\phi, \xi,\eta,g)$ such that
\begin{equation}\label{main}
(\nabla_X\phi)Y+(\nabla_Y\phi)X=2g(X,Y)\xi-\eta(X)Y-\eta(Y)X
\end{equation}
for all vector fields $X,Y$ on $M$, or, equivalently, \eqref{defns} is satisfied.

We recall some basic facts about nearly Sasakian manifolds. We refer to \cite{BlairSY, Olszak, Olszak1, CappDileo} for the details.

In any nearly Sasakian manifold $(M,\phi, \xi,\eta,g)$, the
 characteristic vector field $\xi$ is Killing and the Levi-Civita connection satisfies $\nabla_\xi\xi=0$ and $\nabla_\xi\eta=0$.
One can define a tensor field $h$ of type $(1,1)$ by putting
\begin{equation}\label{nablaxi}
\nabla_X\xi=-\phi X+hX.
\end{equation}
The operator $h$ is skew-symmetric and anticommutes with $\phi$. It satisfies $h\xi=0$, $\eta\circ h=0$ and
\begin{equation*}\label{nablaxih}
\nabla_\xi h=\nabla_\xi \phi=\phi h=\frac13\mathcal{L}_\xi\phi,
\end{equation*}
where $\mathcal{L}_\xi$ denotes the Lie derivative with respect to $\xi$. The vanishing of $h$ provides a necessary and sufficient condition for a nearly Sasakian manifold to be Sasakian (\cite{Olszak1}). In \cite{Olszak} the following formulas are proved:
\begin{equation}\label{varie5}
g((\nabla_X\phi)Y, hZ)=\eta(Y)g(h^2X,\phi Z)-\eta(X)g(h^2Y,\phi Z)+\eta(Y)g(hX,Z),
\end{equation}
\begin{equation}\label{varie7}
(\nabla_Xh^2)Y=\eta(Y)(\phi -h)h^2X+g((\phi -h)h^2X,Y)\xi,
\end{equation}
\begin{equation}\label{Rxi}
R(\xi,X)Y=(\nabla_X\phi)Y-(\nabla_Xh)Y=g(X-h^2X,Y)\xi-\eta(Y)(X-h^2X),
\end{equation}
where $R$ is the Riemannian curvature of $g$.

A central role in the study of nearly Sasakian geometry is played by the symmetric operator $h^2$.
We recall the fundamental result due to Olszak \cite{Olszak}:
\begin{theorem}\label{condizione-olszak}
If a nearly Sasakian non-Sasakian manifold $(M,\phi,\xi,\eta,g)$ satisfies the condition
\begin{equation*}
h^2 = \lambda (I-\eta\otimes\xi)
\end{equation*}
for some real number $\lambda$, then $\dim(M)=5$.
\end{theorem}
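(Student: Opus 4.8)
The plan is to use the hypothesis $h^{2}=\lambda(I-\eta\otimes\xi)$ to determine $\nabla\phi$ and $\nabla h$ almost completely along the contact distribution $\mathcal{D}:=\ker\eta$, and then to feed this rigidity into the curvature identities, where a contraction will leave room only for $\dim\mathcal{D}=4$.

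To begin, since $h$ is skew-symmetric, $h^{2}$ is negative semidefinite, so $\lambda\leq 0$; and $\lambda\neq 0$ because $M$ is not Sasakian, i.e. $h\neq 0$. Hence $\lambda<0$, $h^{2}=\lambda I$ on $\mathcal{D}$, and $h$ restricts to an isomorphism of $\mathcal{D}$. Evaluating \eqref{varie5} for $X,Y\in\mathcal{D}$ annihilates the whole right-hand side (each term carries a factor $\eta(X)$ or $\eta(Y)$), so $g((\nabla_{X}\phi)Y,hZ)=0$ for every $Z$; since $h(\mathcal{D})=\mathcal{D}$ this forces $(\nabla_{X}\phi)Y\in\R\xi$ for $X,Y\in\mathcal{D}$, and computing $\eta((\nabla_{X}\phi)Y)$ from \eqref{nablaxi} gives
\[
(\nabla_{X}\phi)Y=\bigl(g(X,Y)-g(\phi Y,hX)\bigr)\xi,\qquad X,Y\in\mathcal{D}.
\]
Since the right-hand side of \eqref{Rxi} on $\mathcal{D}$ equals $(1-\lambda)g(X,Y)\xi$, the same equation yields the companion formula $(\nabla_{X}h)Y=\bigl(\lambda g(X,Y)-g(\phi Y,hX)\bigr)\xi$ for $X,Y\in\mathcal{D}$. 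Thus $(\nabla_{X}\phi)Y$ and $(\nabla_{X}h)Y$ have vanishing $\mathcal{D}$-component for $X,Y\in\mathcal{D}$, so $\phi$ and $h$ are parallel in $\mathcal{D}$-directions for the connection on $\mathcal{D}$ obtained by projecting $\nabla$; combined with $\phi^{2}=-I$, $(h/\sqrt{-\lambda})^{2}=-I$ on $\mathcal{D}$ and $h\phi=-\phi h$, this exhibits an almost hypercomplex structure on $\mathcal{D}$, so already $\dim\mathcal{D}\equiv 0\pmod 4$.

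The next step is to differentiate once more. Applying $\nabla_{Z}$ ($Z\in\mathcal{D}$) to the formula for $(\nabla_{X}\phi)Y$, using $\nabla_{Z}\xi=-\phi Z+hZ$ and the Ricci (curvature–commutation) identity, and invoking \eqref{Rxi} for the terms containing $\xi$, one obtains an explicit expression for the endomorphism $R(X,Y)\circ\phi-\phi\circ R(X,Y)$ of $\mathcal{D}$, and likewise for $h$; in the same stroke one records $R(X,Y)\xi=0$ for $X,Y\in\mathcal{D}$ and $R(\xi,X)Y=(1-\lambda)\bigl(g(X,Y)\xi-\eta(Y)X\bigr)$ for $X\in\mathcal{D}$, in particular that $R(X,Y)$ preserves $\mathcal{D}$. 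Together with the first and second Bianchi identities these severely constrain the curvature of $\mathcal{D}$.

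The final step — and the main obstacle — is the contraction that produces the dimension. Tracing the second-order identity for $\phi$ (or for $h$) over a suitable pair of indices, the contractions of $g$ and of the structure tensors bring in $2n=\dim\mathcal{D}$ with explicit coefficients, via $\Tr(\phi h)=0$, $(\phi h)^{2}=\lambda I$ on $\mathcal{D}$, and the endomorphism identities $\phi h\phi=h$ and $\phi h^{2}=\lambda\phi$ on $\mathcal{D}$; matching this against the value of the same contraction computed from the first Bianchi identity and the explicit $R(\xi,\cdot)\cdot$ — equivalently, running a Bochner–Weitzenböck argument for the fundamental $2$-form $\Phi$, whose exterior derivative, codifferential and covariant derivative are by now all known — one is left with a single numerical relation. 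Since $\lambda<0$ and $2n\equiv 0\pmod 4$, it is satisfied only for $2n=4$, that is, $\dim M=5$. The delicate part is the bookkeeping in this contraction, namely checking that all the $\phi$-twisted and $h$-twisted Ricci-type traces that arise cancel or are pinned down; this is where the hypothesis $h^{2}=\lambda(I-\eta\otimes\xi)$ is exploited to the hilt.
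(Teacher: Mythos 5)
Your first two steps are sound: the vanishing of the right-hand side of \eqref{varie5} on $\mathcal{D}=\ker\eta$ does force $(\nabla_X\phi)Y\in\mathbb{R}\xi$ for $X,Y\in\mathcal{D}$, the $\xi$-components you compute are correct, and the observation that $\phi$ and $h/\sqrt{-\lambda}$ generate a quaternionic structure on $\mathcal{D}$, whence $2n\equiv 0\pmod 4$, is standard. But the theorem is precisely the statement that $2n=8,12,\dots$ cannot occur, and that is the one step you do not actually carry out. Your "final step" never exhibits the second-order identity to be traced, never specifies which pair of indices is contracted, and never writes down the "single numerical relation" that is supposed to involve $n$; it is asserted that such a relation exists and excludes $2n>4$, with the admission that "the delicate part is the bookkeeping." As it stands there is nothing to verify: a priori a trace of a curvature commutation identity could just as well be an identity satisfied for all $n$ (indeed the naive traces $\Tr(\phi h)=0$, $\Tr(h^2)=2n\lambda$, etc.\ that you list do not by themselves produce any obstruction). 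This is a genuine gap, and it is the entire content of the theorem.

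For comparison: the paper does not reprove this statement (it is recalled from Olszak), but its main Theorem~\ref{nS-Sasaki} supersedes it by a mechanism quite different from curvature contractions, and one which you could have completed with the data you already derived. From your formula for $(\nabla_Xh)Y$ (extended to arbitrary arguments using $\nabla_\xi h=\phi h$ and $h\xi=0$) one gets $dH=3\,\eta\wedge\Phi_1$ with $H=g(h\cdot,\cdot)$ and $\Phi_1=g(\phi h\cdot,\cdot)$; then $0=d^2H=3\,d\eta\wedge\Phi_1$. Under your hypothesis $\eta$ is a contact form (since $\nabla\xi=-\phi+h$ restricted to $\mathcal{D}$ squares to $(\lambda-1)I$, hence is invertible), and the Lefschetz-type injectivity of $\beta\mapsto d\eta\wedge\beta$ on $2$-forms for $n\geq 3$ (Proposition~\ref{L-injective}) then forces $\Phi_1=0$, i.e.\ $h=0$, contradicting non-Sasakianity unless $n\leq 2$. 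The dimension count thus comes from a cohomological/linear-algebraic fact about contact forms, not from a Ricci-type trace; if you want to keep your Weitzenb\"ock-style ending you must actually produce and solve the claimed numerical relation.
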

In \cite{Olszak1} Olszak also proved that any $5$-dimensional nearly Sasakian non-Sasakian manifold is Einstein with scalar curvature $>20$.
In \cite{CappDileo} it is proved that the eigenvalues of $h^2$ are constant. Being $h$ skew-symmetric,
the non-vanishing eigenvalues of $h^2$ are negative, so that the spectrum of $h^2$ is of type
\[\textrm{Spec}(h^2)=\{0,-\lambda_1^2,\ldots,-\lambda_r^2\},\]
$\lambda_i\ne0$ and $\lambda_i\neq\lambda_j$ for $i\ne j$.
Further, if $X$ is an eigenvector of $h^2$ with eigenvalue $-\lambda_i^2$, then $X$, $\phi X$, $hX$, $h\phi X$ are orthogonal eigenvectors of $h^2$ with eigenvalue $-\lambda_i^2$. Hence the minimum dimension for a nearly Sasakian non-Sasakian manifold is $5$.
In the following we denote by $[\xi]$ the $1$-dimensional distribution generated by $\xi$,
and by ${\mathcal D}(0)$ and ${\mathcal D}(-\lambda_i^2)$ the distributions of the eigenvectors $0$ and
$-\lambda_i^2$ respectively. We shall also denote by $\overline{\mathcal{D}}$ the distribution $\left[\xi\right]\oplus\mathcal{D}(-\lambda_{1}^2)\oplus\cdots\oplus\mathcal{D}(-\lambda_{r}^2)$, and by ${\mathcal D}_0$ the distribution orthogonal to $\overline{\mathcal D}$, so that $\mathcal D(0)=[\xi]\oplus{\mathcal{D}}_0$.

We will use the following results, proved in \cite{CappDileo}, concerning nearly Sasakian manifolds of dimension $\geq5$.
\begin{theorem}\label{main1}
Let $M$ be a nearly Sasakian manifold with structure $(\phi,\xi,\eta,g)$ and let
$\mathrm{Spec}(h^2)=\{0,-\lambda_1^2,\ldots,-\lambda_r^2\}$ be the spectrum of $h^2$.
Then the distributions $\mathcal D(0)$ and $[\xi]\oplus\mathcal D(-\lambda_i^2)$
are integrable with totally geodesic leaves. In particular,
\begin{itemize}
\item[\textrm{a)}] the eigenvalue $0$ has multiplicity $2p+1$, $p\geq0$. If $p>0$, the leaves of $\mathcal D(0)$ are
$(2p+1)$-dimensional Sasakian manifolds;

\item[\textrm{b)}] each negative eigenvalue $-\lambda_i^2$ has multiplicity $4$ and the leaves of the
distribution $[\xi]\oplus {\mathcal D}(-\lambda_i^2)$ are $5$-dimensional nearly Sasakian (non-Sasakian) manifolds.

\item[\textrm{c)}]If $p>0$, the distribution $\overline{\mathcal{D}}=\left[\xi\right]\oplus\mathcal{D}(-\lambda_{1}^2)\oplus\cdots\oplus\mathcal{D}(-\lambda_{r}^2)$ is integrable with totally geodesic leaves. 
\end{itemize}
\end{theorem}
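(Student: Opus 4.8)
The plan is to work throughout with the $g$-orthogonal eigenspace decomposition $TM=[\xi]\oplus\mathcal{D}_0\oplus\mathcal{D}(-\lambda_1^2)\oplus\cdots\oplus\mathcal{D}(-\lambda_r^2)$ of the symmetric operator $h^2$. Since the eigenvalues of $h^2$ are constant, these are smooth subbundles of constant rank; since $\phi$ anticommutes with $h$, both $\phi$ and $h$ commute with $h^2$ and hence preserve each summand; on $\mathcal{D}(0)=[\xi]\oplus\mathcal{D}_0$ one has $h=0$, because there $h^2=0$ and $h$ is skew-symmetric; and $\nabla_\xi h^2=0$, which follows at once from $\nabla_\xi h=\phi h$ together with $h\phi=-\phi h$.

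The first substantial step is integrability and total geodesy, for which the main tool is \eqref{varie7}. Let $Y$ be a local section of an eigenbundle $\mathcal{D}(c)$, $c\in\{0,-\lambda_1^2,\dots,-\lambda_r^2\}$, so $\eta(Y)=0$ and $h^2Y=cY$; then \eqref{varie7} reads $c\,\nabla_XY-h^2\nabla_XY=g\big((\phi-h)h^2X,Y\big)\xi$ for every tangent vector $X$. As $\phi-h$ preserves each eigenbundle and the eigenbundles are mutually orthogonal, the right-hand side vanishes whenever $X$ is not a section of $\mathcal{D}(c)$, and comparing eigencomponents of $\nabla_XY$ then shows that $\nabla_XY$ stays in $\mathcal{D}(0)$ when $Y$ is a section of $\mathcal{D}_0$, and in $[\xi]\oplus\mathcal{D}(-\lambda_i^2)$ when $Y$ is a section of $\mathcal{D}(-\lambda_i^2)$. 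Adding $\nabla_X\xi=-\phi X+hX$ from \eqref{nablaxi} (which preserves $\mathcal{D}(0)$ and each $[\xi]\oplus\mathcal{D}(-\lambda_i^2)$), $\nabla_\xi\xi=0$, and the fact that $\nabla_\xi$ preserves the eigenbundles (because $\nabla_\xi h^2=0$), I conclude that $\mathcal{D}(0)$, each $[\xi]\oplus\mathcal{D}(-\lambda_i^2)$ and $\overline{\mathcal{D}}$ are involutive and totally geodesic in the sense that $\nabla_UV$ is a section whenever $U,V$ are; hence they are integrable with totally geodesic leaves.

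Next I would identify the leaves and compute the multiplicities. A leaf $L$ of $\mathcal{D}(0)$ or of $[\xi]\oplus\mathcal{D}(-\lambda_i^2)$ is $\phi$-invariant and contains $\xi$, hence inherits an almost contact metric structure; since $L$ is totally geodesic, $\nabla^L=\nabla|_{TL}$, so \eqref{defns} restricts to $L$ and $L$ is nearly Sasakian with associated operator $h_L=h|_{TL}$. On a leaf of $\mathcal{D}(0)$ we have $h_L=0$, hence $L$ is Sasakian by Olszak's criterion, of dimension $1+\dim\mathcal{D}_0$; as $\phi|_{\mathcal{D}_0}$ is a complex structure, $\dim\mathcal{D}_0=2p$, which proves a). On a leaf of $[\xi]\oplus\mathcal{D}(-\lambda_i^2)$ we have $h_L^2=-\lambda_i^2(I-\eta\otimes\xi)$ with $\lambda_i\neq0$, so $L$ is nearly Sasakian non Sasakian and satisfies the hypothesis of Theorem \ref{condizione-olszak}; therefore $\dim L=5$ and $\dim\mathcal{D}(-\lambda_i^2)=4$, which proves b).

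Finally, for c) (with $p>0$) I regard $\overline{\mathcal{D}}$ as a foliation with normal bundle $\mathcal{D}_0$ carrying the complex structure $J:=\phi|_{\mathcal{D}_0}$ and the transverse metric $g_T:=g|_{\mathcal{D}_0}$. The metric is bundle-like for $\overline{\mathcal{D}}$: for $V$ a section of $\overline{\mathcal{D}}$ and $X,Y$ sections of $\mathcal{D}_0$, $(\mathcal{L}_Vg)(X,Y)=g(\nabla_XV,Y)+g(X,\nabla_YV)=-g(V,\nabla_XY)-g(V,\nabla_YX)=0$, since $\nabla_XY,\nabla_YX\in\mathcal{D}(0)\perp\mathcal{D}(-\lambda_i^2)$ and $\xi$ is Killing. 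The crucial identity is $(\nabla_X\phi)V=0$ for every $X\in\mathcal{D}_0$ and $V\in\mathcal{D}(-\lambda_1^2)\oplus\cdots\oplus\mathcal{D}(-\lambda_r^2)$: on one hand \eqref{varie5} gives $(\nabla_X\phi)V\perp\mathrm{Im}(h)=\mathcal{D}(-\lambda_1^2)\oplus\cdots\oplus\mathcal{D}(-\lambda_r^2)$, so $(\nabla_X\phi)V\in[\xi]\oplus\mathcal{D}_0$; on the other hand, since $h^2X=0$, \eqref{varie7} forces $\nabla_X(\phi V)$ and $\nabla_XV$ to remain in $\mathcal{D}(-\lambda_1^2)\oplus\cdots\oplus\mathcal{D}(-\lambda_r^2)$, whence $(\nabla_X\phi)V$ lies there too; the two memberships force $(\nabla_X\phi)V=0$, and \eqref{main} then gives $(\nabla_V\phi)X=0$ as well. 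Combining this with $(\nabla_\xi\phi)X=\phi hX=0$ and with $(\nabla_X\phi)Y=g(X,Y)\xi$ for $X,Y\in\mathcal{D}_0$ — which follows from the Sasakian relation \eqref{sasakian} on the leaves of $\mathcal{D}(0)$ together with total geodesy — one obtains that the transverse Levi-Civita connection of $\overline{\mathcal{D}}$ annihilates $J$, so the foliation is transversely K\"ahler. I expect the main obstacle to be precisely this last step: placing $\overline{\mathcal{D}}$ into the Riemannian-foliation framework (bundle-like metric, holonomy invariance of $(J,g_T)$, identification of the transverse connection along leaf directions with the Bott connection) and then extracting the vanishing $(\nabla_X\phi)V=0$, which is what upgrades the transverse structure from nearly K\"ahler to genuinely K\"ahler; the earlier steps are essentially eigenspace bookkeeping combined with Olszak's rigidity result, Theorem \ref{condizione-olszak}.
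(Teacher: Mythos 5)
A point of context first: the paper does not actually prove Theorem~\ref{main1}; it is quoted verbatim from \cite{CappDileo} as a known result, so there is no in-paper proof to compare your argument against. Judged on its own terms, your proposal is correct and essentially complete, and it is a genuinely self-contained reconstruction from the identities the paper does record: \eqref{nablaxi} and \eqref{varie7} for the autoparallelism of the eigendistributions, Olszak's criterion ($h=0$ iff Sasakian, \cite{Olszak1}) for part a), Theorem~\ref{condizione-olszak} for the dimension count in part b), and \eqref{varie5}+\eqref{varie7}+\eqref{main} for the transverse K\"ahler property in part c). Indeed, your derivation of $(\nabla_X\phi)V=0$ for $X\in\mathcal D_0$ and $V\in\mathcal D(-\lambda_1^2)\oplus\cdots\oplus\mathcal D(-\lambda_r^2)$ runs parallel to the computation the authors perform inside the proof of Proposition~\ref{lemmanablaphi}, where they establish \eqref{nablaD} by invoking Theorem~\ref{main1}; you are in effect supplying the input that the paper imports.

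Two small points worth tightening. In the eigencomponent step, the sentence ``the right-hand side vanishes whenever $X$ is not a section of $\mathcal D(c)$'' is a red herring: what makes the argument work for \emph{all} $X$ is simply that the right-hand side $g((\phi-h)h^2X,Y)\xi$ is always proportional to $\xi$, so projecting $c\,\nabla_XY-h^2\nabla_XY$ onto $\mathcal D(c')$ for $c'\neq c$ kills those components regardless; state it that way and the case distinction disappears. In part c), you flag the transverse K\"ahler step as a potential obstacle, but your ingredients already close it: the bundle-like computation, $(\nabla_X\phi)Y=g(X,Y)\xi$ for $X,Y\in\mathcal D_0$ (which projects to zero on the normal bundle), $(\nabla_V\phi)X=0$ and $(\nabla_\xi\phi)X=\phi hX=0$ for leafwise directions, together with the fact that $\pi(\nabla_{\phi Y}U)=\pi(\nabla_YU)=0$ for $U\in\mathcal D(-\lambda_i^2)$ and the cancellation $\pi(\nabla_{\phi Y}\xi)=Y=\phi\pi(\nabla_Y\xi)$, give $\nabla^TJ=0$ in both normal and leafwise directions, and parallelism under the torsion-free transverse Levi-Civita connection yields integrability of $J$ and closedness of the transverse K\"ahler form. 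So the only remaining work is bookkeeping, not a missing idea.
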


\begin{theorem}
For a nearly Sasakian manifold $(M,\phi,\xi,\eta,g)$ of dimension $2n+1\geq 5$ the $1$-form $\eta$ is a contact form.
\end{theorem}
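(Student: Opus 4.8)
The plan is to compute $d\eta$ directly from the structure equation $\nabla_X\xi=-\phi X+hX$ and then read off that $\eta\wedge(d\eta)^n$ is nowhere vanishing. Using the standard identity $d\alpha(X,Y)=g(\nabla_XZ,Y)-g(\nabla_YZ,X)$, valid for $\alpha=g(Z,-)$, applied with $Z=\xi$ and combined with the skew-symmetry of $\phi$ and of $h$, I obtain
\begin{equation*}
d\eta(X,Y)=2g(X,\phi Y)+2g(hX,Y)=2g\bigl(X,(\phi-h)Y\bigr);
\end{equation*}
in particular $d\eta=2\Phi+2H$, where $H(X,Y):=g(hX,Y)$ is a $2$-form since $h$ is skew-symmetric. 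Because $h\xi=0$ and $\eta\circ\phi=0$, one has $d\eta(\xi,-)=0$, so $\xi$ lies in the radical of $d\eta$; and since $\eta(\xi)=1$, the $2n$-dimensional distribution $\mathcal D:=\ker\eta$ is a complement of $[\xi]$. Hence $\eta\wedge(d\eta)^n$ is nonzero at a given point exactly when the restriction of $d\eta$ to $\mathcal D$ is nondegenerate there, that is, when $\phi-h$ is an isomorphism of $\mathcal D$.

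To establish the latter I would first note that $\mathcal D$ is invariant under $\phi$, $h$ and $h^2$ --- from $\eta\circ\phi=0$, $\eta\circ h=0$, and the fact that $h^2$ is symmetric with $h^2\xi=0$ --- so $\phi-h$ indeed maps $\mathcal D$ to itself. On $\mathcal D$ one has $\phi^2=-I$, and since $h$ anticommutes with $\phi$ the mixed terms cancel, giving
\begin{equation*}
-(\phi-h)^2=-\phi^2+\phi h+h\phi-h^2=-\phi^2-h^2=I-h^2\qquad\text{on }\mathcal D.
\end{equation*}
As $h$ is skew-symmetric, $h^2$ is symmetric and negative semidefinite, so $I-h^2$ is symmetric positive definite, hence invertible; therefore $(\phi-h)^2$, and with it $\phi-h$, is invertible on $\mathcal D$. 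Thus $d\eta|_{\mathcal D}$ is a symplectic form, $(d\eta)^n$ does not vanish on $\mathcal D$, and $\eta\wedge(d\eta)^n$ is nowhere zero; i.e. $\eta$ is a contact form.

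I do not anticipate a genuine obstacle: once the operator $h$ and its elementary properties are available, the proof is essentially a short computation, and the sign condition on $h^2$ is the only structural input. The one point needing care is the linear-algebra reduction from ``$d\eta$ nondegenerate on $\ker\eta$'' to ``$\eta\wedge(d\eta)^n\neq0$'', which rests on $\xi$ spanning the radical of $d\eta$ and on $\eta(\xi)=1$. It is worth observing that the spectral decomposition of $h^2$ (Theorem~\ref{main1}) is not used at all --- only the sign of $h^2$ matters --- so the same argument in fact covers the $3$-dimensional case as well.
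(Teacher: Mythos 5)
Your proof is correct, and every step checks out: with the paper's convention $d\eta(X,Y)=X\eta(Y)-Y\eta(X)-\eta([X,Y])$ (the one under which contact metric means $d\eta=2\Phi$), the identity $d\eta(X,Y)=2g(X,(\phi-h)Y)$ follows from \eqref{nablaxi} and the skew-symmetry of $\phi$ and $h$; the anticommutation $h\phi=-\phi h$ kills the cross terms so that $-(\phi-h)^2=I-h^2$ on $\ker\eta$, which is positive definite because $h^2$ is negative semidefinite; and the reduction to nondegeneracy of $d\eta|_{\ker\eta}$ is sound since $\eta$ annihilates any basis of $\ker\eta$. Note, however, that this theorem is not proved in the present paper at all --- it is quoted from \cite{CappDileo} --- so there is no internal proof to compare against; your argument is a clean, self-contained and elementary one, using only the basic properties of $h$ ($h\xi=0$, skew-symmetry, anticommutation with $\phi$) and none of the spectral machinery of Theorem~\ref{main1}, and, as you observe, it is dimension-independent. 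One small remark: your computation also shows $d\eta=2\Phi+2H$ rather than $d\eta=2\Phi$, so a nearly Sasakian manifold with $h\neq0$ is contact but not a contact \emph{metric} manifold for the given structure $(\phi,\xi,\eta,g)$; this is consistent with Theorem~\ref{nS-Sasaki}, which forces $h=0$ in dimension greater than five.
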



Before listing some known results on nearly cosymplectic manifolds, we recall that an almost contact metric manifold $(M,\phi,\xi,\eta,g)$ is said to be a \emph{coK\"ahler manifold} if $d\eta=0$, $d\Phi=0$ and $N_{\phi}\equiv 0$. Equivalently, one can require $\nabla\phi=0$. It is known that a coK\"ahler manifold is locally the Riemannian product of the real line and a K\"ahler manifold, which is an integral submanifold of the distribution $\mathcal{D}=\mathrm{Ker}(\eta)$. Note that some authors call cosymplectic the class of manifold that we denominate coK\"ahler (see \cite{survey} for details).

A \emph{nearly cosymplectic manifold} is an almost contact metric manifold $(M,\phi, \xi,\eta,g)$ such that
\begin{equation}\label{main_c}
(\nabla_X\phi)Y+(\nabla_Y\phi)X=0
\end{equation}
for all vector fields $X,Y$. Clearly, this condition is equivalent to   \eqref{defncosy}.
It is known that in a nearly cosymplectic manifold the Reeb vector field $\xi$ is  Killing and satisfies $\nabla_\xi\xi=0$ and $\nabla_\xi\eta=0$.
The tensor field $h$ of type $(1,1)$ defined by
\begin{equation}\label{nablaxi_c}
\nabla_X\xi=hX
\end{equation}
is skew-symmetric and anticommutes with $\phi$. It satisfies $h\xi=0$, $\eta\circ h=0$ and
\begin{equation*}\label{nearlycos-nablaxiphi}
\nabla_\xi\phi=\phi h=\frac 13{\mathcal L}_\xi\phi.
\end{equation*}
The following formulas hold (\cite{E0,E}):
\begin{align}
g((\nabla_X\phi)Y, hZ)&=\eta(Y)g(h^2X,\phi Z)-\eta(X)g(h^2Y,\phi Z),\label{nablaphi_hc}\\
(\nabla_Xh)Y&=g(h^2X,Y)\xi-\eta(Y)h^2X,\label{nablah_c}\\
\mathrm{tr}(h^2)&=\mathrm{constant}.\label{tr}
\end{align}

\section{Nearly Sasakian manifolds}
We start by computing the covariant derivatives of the structure endomorphisms $\phi$ and $h$ on a nearly Sasakian manifold.
\begin{proposition}\label{lemmanablaphi}
Let $(M,\phi,\xi,\eta,g)$ be a nearly Sasakian manifold of dimension $2n+1\geq5$. Then for all vector fields $X$, $Y$ on $M$ one has
\begin{gather}
(\nabla_{X}\phi)Y = \eta(X) \phi h Y - \eta(Y) (X + \phi h X) + g(X+\phi h X, Y)\xi,\label{nablaphi-completo}\\
(\nabla_{X}h)Y = \eta(X)\phi h Y - \eta(Y) (h^2 X + \phi h X) + g(h^{2}X + \phi h X,Y)\xi, \label{nablah-completo}\\
(\nabla_{X}\phi h)Y = g(\phi h^2 X - hX,Y)\xi + \eta(X)(\phi h^2 Y - hY) - \eta(Y) (\phi h^2 X - hX). \label{nablaphih-completo}
\end{gather}
\end{proposition}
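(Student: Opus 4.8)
The plan is to derive \eqref{nablaphi-completo} first, since \eqref{nablah-completo} will follow from it together with the curvature identity \eqref{Rxi}, and \eqref{nablaphih-completo} will then be a direct consequence of the first two via the Leibniz rule for $\nabla_X(\phi h)$. To obtain \eqref{nablaphi-completo}, I would start from the defining relation \eqref{main}, which controls the symmetric part $(\nabla_X\phi)Y+(\nabla_Y\phi)X$, and seek a companion expression for the antisymmetric part $(\nabla_X\phi)Y-(\nabla_Y\phi)X$. The natural tool is formula \eqref{varie5}, which computes $g((\nabla_X\phi)Y,hZ)$; but since $h$ is not invertible on all of $TM$ this only pins down the component of $(\nabla_X\phi)Y$ in the image of $h$. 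I therefore expect to also need the standard identity expressing $(\nabla_X\phi)Y$ in terms of $d\Phi$, the Nijenhuis tensor $N_\phi$, and $d\eta$ — the general ``$6$-term'' formula valid on any almost contact metric manifold — specialized using the nearly Sasakian relations $d\eta=2\Phi$ (from the contact property, Theorem in the excerpt) and the known structure of $N_\phi$.

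Concretely, I would proceed as follows. First, polarize \eqref{main} and combine it with the known identity $2g((\nabla_X\phi)Y,Z)=d\Phi(X,\phi Y,\phi Z)-d\Phi(X,Y,Z)+g(N_\phi(Y,Z),\phi X)+d\eta(\phi Y,X)\eta(Z)-d\eta(\phi Z,X)\eta(Y)$ to reduce everything to knowledge of $d\Phi$ and $N_\phi$; using $\nabla_\xi\xi=0$, $\nabla_\xi\eta=0$, $\xi$ Killing, and \eqref{nablaxi} one computes $d\eta=2\Phi$ and identifies the $\eta$-components of $\nabla\phi$ directly. Second, for the ``horizontal'' part (both arguments in $\ker\eta$), I would take $Z=hW$ in \eqref{varie5} and also use that $h$ anticommutes with $\phi$ and that, on each eigendistribution $\mathcal D(-\lambda_i^2)$, the operator $h$ is (up to scale) an isomorphism intertwining $\phi$; this lets me recover the full horizontal component of $(\nabla_X\phi)Y$ on $\overline{\mathcal D}$, while on $\mathcal D_0$ the leaves are Sasakian by Theorem \ref{main1}(a), so $(\nabla_X\phi)Y=g(X,Y)\xi-\eta(Y)X$ there and one checks this agrees with the claimed formula since $hX=0$ on $\mathcal D_0$. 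Assembling the vertical and horizontal pieces gives \eqref{nablaphi-completo}.

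For \eqref{nablah-completo}, I would subtract: from \eqref{Rxi} we have $(\nabla_Xh)Y=(\nabla_X\phi)Y-R(\xi,X)Y=(\nabla_X\phi)Y-g(X-h^2X,Y)\xi+\eta(Y)(X-h^2X)$, and substituting the just-proved \eqref{nablaphi-completo} and simplifying the $\eta$- and $\xi$-terms yields exactly \eqref{nablah-completo}. Finally, for \eqref{nablaphih-completo} I compute $(\nabla_X(\phi h))Y=(\nabla_X\phi)(hY)+\phi((\nabla_Xh)Y)$, plug in \eqref{nablaphi-completo} evaluated at $hY$ (noting $\eta(hY)=0$, $h^2$ symmetric) and \eqref{nablah-completo}, and collect terms, using $\phi^2=-I+\eta\otimes\xi$, $\phi\xi=0$, $h\phi=-\phi h$, and $\phi h^2=h^2\phi$; the $\xi$-components recombine via $g(\phi hX,hY)=-g(h^2X,\phi Y)$-type identities.

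The main obstacle I anticipate is the horizontal computation in the second step: extracting the full value of $(\nabla_X\phi)Y$ from \eqref{varie5}, which a priori only sees the $h$-image, requires carefully exploiting the eigenspace decomposition of $h^2$ from Theorem \ref{main1} together with the interplay of $\phi$ and $h$ on each $4$-dimensional block $\mathcal D(-\lambda_i^2)$, and then checking consistency with the symmetry constraint \eqref{main}. Keeping track of signs in the anticommutation $h\phi=-\phi h$ and in the $\eta$-dependent terms throughout the recombination for \eqref{nablaphih-completo} is the other place where care is needed, but it is routine once the first formula is in hand.
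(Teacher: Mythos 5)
Your overall architecture matches the paper's: you establish \eqref{nablaphi-completo} by computing the $\xi$-component, the $\mathrm{Im}(h)$-component via \eqref{varie5}, and the remaining $\mathcal{D}_0$-component separately; then \eqref{nablah-completo} follows from \eqref{Rxi} and \eqref{nablaphih-completo} from the Leibniz rule, exactly as in the paper. However, your treatment of \eqref{nablaphi-completo} has two genuine problems. First, the identity $d\eta=2\Phi$ is false on a nearly Sasakian manifold: from \eqref{nablaxi} one gets $d\eta(X,Y)=2g(X,\phi Y)+2g(hX,Y)$, so $(\phi,\xi,\eta,g)$ is a contact \emph{metric} structure only when $h=0$. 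The theorem quoted in Section 2 asserts only that $\eta$ is a contact form, i.e.\ $\eta\wedge(d\eta)^n\neq0$. Moreover, the ``six-term'' formula requires knowing $N_\phi$, and for a nearly Sasakian manifold determining $N_\phi$ is essentially equivalent to the formula \eqref{nablaphi-completo} you are trying to prove, so that route is circular as described.

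Second, and more importantly, your argument does not pin down the component of $(\nabla_X\phi)Y$ along $\mathcal{D}_0$ except when both $X$ and $Y$ are tangent to $\mathcal{D}(0)$. Formula \eqref{varie5} only controls pairings with vectors of the form $hZ$, and the Sasakian structure on the leaves of $\mathcal{D}(0)$ only determines $(\nabla_X\phi)Y$ when $X,Y\in\mathcal{D}(0)$. What is still needed is $g((\nabla_X\phi)Y,V)=-\eta(Y)g(X,V)$ for $V\in\mathcal{D}_0$ in the mixed cases, and this is where the paper's actual work lies: for $X,Y\in\overline{\mathcal{D}}$ it uses that $\overline{\mathcal{D}}$ is totally geodesic (Theorem \ref{main1}), so $(\nabla_X\phi)Y\in\overline{\mathcal{D}}$ and the pairing vanishes; for $X\in\mathcal{D}_0$, $Y\in\overline{\mathcal{D}}$ it uses the skew-adjointness $g((\nabla_X\phi)Y,V)=-g(Y,(\nabla_X\phi)V)$ together with the Sasakian formula for $(\nabla_X\phi)V$ on the leaves of $\mathcal{D}(0)$; and for $X\in\overline{\mathcal{D}}$, $Y\in\mathcal{D}_0$ it symmetrizes via \eqref{main} to reduce to the previous case. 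None of these steps appears in your proposal (and they cannot be bypassed unless $\mathcal{D}_0=0$); by contrast, the interplay of $\phi$ and $h$ on the $4$-dimensional blocks $\mathcal{D}(-\lambda_i^2)$, which you flag as the main difficulty, is not where the difficulty is. Your plans for \eqref{nablah-completo} and \eqref{nablaphih-completo} are correct and coincide with the paper's.
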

\begin{proof}
From \eqref{varie5}, for all vector fields $X,Y,Z$ we have
\[
g((\nabla_X\phi)Y, hZ)=-\eta(Y)g(\phi h X, hZ)+\eta(X)g(\phi hY,hZ)-\eta(Y)g(X,hZ),
\]
which is coherent with \eqref{nablaphi-completo}. On the other hand,
\begin{align*}g((\nabla_X\phi)Y, \xi)&=-g(Y, (\nabla_X\phi)\xi)=g(Y,\phi\nabla_X\xi)=g(Y,-\phi^2X+\phi hX)\\
&=g(X+\phi hX,Y)-\eta(X)\eta(Y).
\end{align*}
Now, assume that $\textrm{Spec}(h^2)=\{0,-\lambda_1^2,\ldots,-\lambda_r^2\}$ and consider the distribution  $\overline{\mathcal{D}}=\left[\xi\right]\oplus\mathcal{D}(-\lambda_{1}^2)\oplus\cdots\oplus\mathcal{D}(-\lambda_{r}^2)$. In order to complete the proof of \eqref{nablaphi-completo}, it remains to show that
\begin{equation}\label{nablaD}
g((\nabla_X\phi)Y, V)=-\eta(Y)g(X,V)
\end{equation}
for every $X,Y\in{\frak X}(M)$ and $V\in {\mathcal D}_0$. Since the distribution $\overline{\mathcal{D}}$ is integrable with totally geodesic leaves, if $X,Y\in \overline{\mathcal{D}}$ then $(\nabla_X\phi)Y\in \overline{\mathcal{D}}$ and  both sides in \eqref{nablaD} vanish. Now consider $X\in \mathcal{D}_0$ and $Y\in \overline{\mathcal{D}}$. Then
\[g((\nabla_X\phi)Y, V)=-g(Y, (\nabla_X\phi)V)=-\eta(Y)g(X,V),\]
where we applied the fact that the distribution ${\mathcal D}(0)=[\xi]\oplus \mathcal{D}_0$ is integrable with totally geodesic leaves, and the induced almost contact metric structure on each leaf is Sasakian, so that $(\nabla_X\phi)V=g(X,V)\xi-\eta(V)X$. On the other hand, if we take $X\in \overline{\mathcal{D}}$ and $Y\in \mathcal{D}_0$, then $g((\nabla_Y\phi)X, V)=-\eta(X)g(Y,V)$, and applying \eqref{main}, we have
\[g((\nabla_X\phi)Y, V)=-g((\nabla_Y\phi)X+\eta(X)Y, V)=0.\]
Finally, taking $X,Y\in \mathcal{D}_0$, \eqref{nablaD} is verified because of \eqref{sasakian} and the fact that the vector fields $X,Y,V$ are orthogonal to $\xi$.

As regards \eqref{nablah-completo}, it follows from \eqref{Rxi} and \eqref{nablaphi-completo}. Finally, a straightforward computation using \eqref{nablaphi-completo} and \eqref{nablah-completo} gives \eqref{nablaphih-completo}.
\end{proof}


We will write $\eps_{d\eta}$ for the operator on $\Omega^*(M)$ defined by
$\omega\mapsto {d\eta}\wedge\omega$.     
\begin{proposition}\label{L-injective}
Let $(M,\eta)$ be a contact manifold of dimension $2n+1$. Then, the operator
\begin{align*}
    \eps_{d\eta}:\Omega^2(M) &\to \Omega^{4}(M)\\
      \beta &\mapsto d\eta\wedge \beta
\end{align*}
is injective for $n\geq3$.
\end{proposition}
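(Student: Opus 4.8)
Suppose $\beta\in\Omega^2(M)$ satisfies $d\eta\wedge\beta=0$; the goal is to show $\beta=0$. The plan is to substitute $\beta$ into the commutator identity of Lemma~\ref{commutatorL}, then contract the resulting relation with the Reeb vector field $\xi$, and finally exploit that $d\eta$ is non-degenerate.

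First I apply Lemma~\ref{commutatorL} to $\beta$, i.e. with $p=2$. Since $\Lambda$ lowers degree by two, $\Lambda\beta$ is a function, which I call $f$; and since $\Lambda$ and $\eps_{d\eta}$ both have even degree, their graded commutator is the ordinary commutator, so the identity reads
\[
\Lambda(d\eta\wedge\beta)-d\eta\wedge f=(2-n)\beta-\eta\wedge i_\xi\beta.
\]
The hypothesis $d\eta\wedge\beta=0$ together with $d\eta\wedge f=f\,d\eta$ collapses this to
\[
f\,d\eta=(n-2)\,\beta+\eta\wedge i_\xi\beta.
\]

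Next I contract this equation with $\xi$. Using $i_\xi d\eta=0$ and $\eta(\xi)=1$ one has $i_\xi(\eta\wedge i_\xi\beta)=i_\xi\beta$, so the contracted equation becomes $(n-1)\,i_\xi\beta=0$; as $n\geq2$ this forces $i_\xi\beta=0$, and the previous display simplifies to
\[
\beta=\frac{1}{n-2}\,f\,d\eta,
\]
where dividing by $n-2$ is exactly the place the hypothesis $n\geq3$ is used. Substituting this back into $d\eta\wedge\beta=0$ gives $\frac{f}{n-2}\,(d\eta)^2=0$; and since $\eta$ is a contact form with $n\geq2$, the $2n$-form $(d\eta)^n$, hence also $(d\eta)^2$, is nowhere vanishing, so $f\equiv0$ and $\beta=0$.

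I do not expect a real obstacle here. Once Lemma~\ref{commutatorL} is in hand the computation is short; the only things needing care are the degree and sign bookkeeping in the graded commutator and in the interior products. The one genuinely substantive point is the non-vanishing of $(d\eta)^2$, which is also what explains the restriction $n\geq3$: for $n=2$ the factor $n-2$ degenerates and, in any case, $\eps_{d\eta}\colon\Omega^2(M)\to\Omega^4(M)$ cannot be injective for fibre-dimension reasons.
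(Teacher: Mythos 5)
Your proof is correct, and it takes a slightly different route from the paper's, although both arguments hinge on the same commutator identity of Lemma~\ref{commutatorL}. You work directly with the kernel: assuming $\eps_{d\eta}\beta=0$, the identity collapses to $f\,d\eta=(n-2)\beta+\eta\wedge i_\xi\beta$ with $f=\Lambda\beta$, after which contracting with $\xi$ kills $i_\xi\beta$ (correctly, since $i_\xi d\eta=0$ and $i_\xi(\eta\wedge i_\xi\beta)=i_\xi\beta$ gives $(n-1)i_\xi\beta=0$), and the pointwise non-vanishing of $(d\eta)^2$ (which follows from $\eta\wedge(d\eta)^n\neq0$ for $n\geq2$) finishes the job. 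The paper instead keeps $\beta$ arbitrary and applies $\eps_\eta i_\xi$ and $\Lambda$ (together with $\Lambda d\eta=-n$) to the commutator identity so as to eliminate the unwanted terms, producing an explicit left inverse
$\Lambda+\frac{1}{1-n}\eps_\eta i_\xi\Lambda-\frac{1}{2(1-n)}\eps_{d\eta}\Lambda^2$
of $(2-n)^{-1}\eps_{d\eta}$. The two arguments use the hypothesis $n\geq3$ in the same place (division by $n-2$); what the paper's version buys is the explicit one-sided inverse, valid on all of $\Omega^2(M)$, while yours is shorter and replaces the $\Lambda^2$ computation by the elementary observation that $(d\eta)^2$ is nowhere zero. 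Your closing remark that injectivity genuinely fails for $n=2$ for fibre-dimension reasons is also correct and explains why the restriction is sharp.
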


\begin{proof}
Since $d\eta$ is a nondegenerate $2$-form on the distribution ${\mathcal D}=\mathrm{Ker}(\eta)$, the assumption $n\geq3$ implies that the operators
\begin{equation}\label{inj1}
   \eps_{d\eta}:\Omega^1({\mathcal D}) \to \Omega^{3}({\mathcal D})\qquad
      \alpha \mapsto d\eta\wedge \alpha
\end{equation}
and   
\begin{equation}\label{inj2}
    \eps_{d\eta}:\Omega^2({\mathcal D}) \to \Omega^{4}({\mathcal D})\qquad
      \beta \mapsto d\eta\wedge \beta.
\end{equation}
are injective.
For every $k\geq1$ we have
\begin{equation}\label{dec}
\Omega^k(M)=\Omega^k({\mathcal D})\oplus \eta\wedge\Omega^{k-1}({\mathcal D}).
\end{equation}
Indeed, every $k$-form $\omega$ on $M$ can be decomposed as
\[\omega=i_\xi(\eta\wedge\omega)+\eta\wedge i_\xi\omega.\]
On the other hand, if a $k$-form $\omega$ belongs to the intersection of the two subspaces, that is $\omega\in \Omega^k({\mathcal D})$ and $\omega=\eta\wedge\sigma$, with   $\sigma\in\Omega^{k-1}({\mathcal D})$, then
\[\sigma=i_\xi(\eta\wedge\sigma)+\eta\wedge i_\xi\sigma=i_\xi\omega=0,\]
and thus $\omega=0$.  This shows that the sum in \eqref{dec} is direct.

Now, let $\omega=\beta+\eta\wedge\alpha$, with $\beta\in\Omega^2({\mathcal D})$ and $\alpha\in\Omega^1({\mathcal D})$, be a $2$-form on $M$ such that $d\eta\wedge\omega=0$. Then, owing to \eqref{dec} for $k=4$, we have $d\eta\wedge\beta=0$ and $d\eta\wedge\eta\wedge\alpha=0$, which also gives $d\eta\wedge\alpha=0$. Finally, we deduce from the injectivity of the operators in \eqref{inj1} and \eqref{inj2} that both the forms $\beta$ and $\alpha$ vanish, and thus $\omega=0$.

\end{proof}

Now we are able to prove our main result.
\begin{theorem}\label{nS-Sasaki}
Every nearly Sasakian manifold of dimension $2n+1>5$ is Sasakian.
\end{theorem}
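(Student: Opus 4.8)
The plan is to reduce the statement to Proposition~\ref{L-injective}. Recall (\cite{Olszak1}) that a nearly Sasakian manifold is Sasakian if and only if $h=0$. Since $\eta\circ h=0$ and $\phi^2=-I+\eta\otimes\xi$, we get $\phi(\phi hX)=-hX$, so $h=0$ if and only if $\phi h=0$. As $\phi$ and $h$ are skew-symmetric and anticommute, $\phi h$ is skew-symmetric, hence it has an associated $2$-form
\[
\sigma(X,Y):=g(X,\phi h Y),
\]
which vanishes precisely when $\phi h=0$. Thus it suffices to prove $\sigma=0$. Now $2n+1>5$ forces $n\geq 3$, and since $\eta$ is a contact form (by the theorem recalled above) $(M,\eta)$ is a contact manifold, so Proposition~\ref{L-injective} applies: the operator $\eps_{d\eta}\colon\Omega^2(M)\to\Omega^4(M)$ is injective. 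Therefore the whole theorem will follow once we establish that $d\eta\wedge\sigma=0$.

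To prove this we use only the formulas of Proposition~\ref{lemmanablaphi}. First, from \eqref{nablaxi} one computes $d\eta=2\Phi+2\psi$, where $\psi(X,Y):=g(hX,Y)$ is the $2$-form attached to $h$. Next, writing $d\psi$ in terms of covariant derivatives and inserting \eqref{nablah-completo}, the symmetric ($h^2$-)terms cancel under the alternation and one finds
\[
d\psi=-3\,\eta\wedge\sigma ,
\]
so $\eta\wedge\sigma=-\tfrac13 d\psi$ is exact, in particular closed. In the same way \eqref{nablaphih-completo} gives
\[
d\sigma=3\,\eta\wedge\nu, \qquad \nu(X,Y):=g(X,(\phi h^2-h)Y),
\]
using that $\phi h^2-h$ is again skew-symmetric; in particular $d\sigma$ is divisible by $\eta$, so $\eta\wedge d\sigma=0$. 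Combining, $0=d(\eta\wedge\sigma)=d\eta\wedge\sigma-\eta\wedge d\sigma=d\eta\wedge\sigma$. Hence $\sigma=0$, so $\phi h=0$, so $h=0$, and $M$ is Sasakian.

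The verification of the three exterior-derivative identities above is a routine (if somewhat lengthy) computation from Proposition~\ref{lemmanablaphi}; the points that need attention are that the symmetric part of $\nabla h$ cancels, making $d\psi$ a \emph{nonzero} multiple of $\eta\wedge\sigma$ (so that $\eta\wedge\sigma$ is closed), and that $d\sigma$ is divisible by $\eta$. The genuine obstacle, however, is concentrated in Proposition~\ref{L-injective}: its hypothesis $n\geq3$ is exactly why this argument, and the conclusion, break down in dimension $5$, where $\eps_{d\eta}$ on $2$-forms need not be injective and non-Sasakian nearly Sasakian manifolds do occur.
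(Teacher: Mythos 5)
Your proposal is correct and follows essentially the same route as the paper: up to sign conventions, your $\psi$ and $\sigma$ are the paper's $H$ and $-\Phi_1$, and the identity $0=d(\eta\wedge\sigma)=d\eta\wedge\sigma$ is exactly the paper's computation $0=d^{2}H=3\,d\eta\wedge\Phi_1-3\,\eta\wedge d\Phi_1=3\,d\eta\wedge\Phi_1$, after which both arguments invoke Proposition~\ref{L-injective} and the contactness of $\eta$ to conclude $h=0$. The two exterior-derivative identities you defer as routine are precisely the ones the paper verifies in detail from Proposition~\ref{lemmanablaphi}, and they do hold as you state them.
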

\begin{proof}
Let $M$ be a nearly Sasakian manifold with structure $(\phi,\xi,\eta,g)$, of dimension $2n+1$. We consider the $2$-forms $H$ and $\Phi_k$, $k=1,2$, defined by
\[H(X,Y)=g(hX,Y),\qquad \Phi_k(X,Y)=g(\phi h^k X,Y).\]
We shall prove that
\begin{align}
dH&=3\eta\wedge\Phi_1\label{dH},\\
d\Phi_1&=3\eta\wedge(\Phi_2-H).\label{dPhi1}
\end{align}
From \eqref{nablah-completo}, we have that for all vector fields $X,Y,Z$,
\begin{align*}
g((\nabla_{X}h)Y,Z) &= \eta(X)g(\phi h Y,Z) - \eta(Y) g(h^2 X + \phi h X,Z) + \eta(Z)g(h^{2}X + \phi h X,Y)\\
&= \eta(X)g(\phi h Y,Z) +\eta(Y) g(\phi h Z,X)+\eta(Z)g(\phi h X,Y)\\
&\quad - \eta(Y) g(h^2 Z,X)+ \eta(Z)g(h^{2}X ,Y).
\end{align*}
Therefore,
\begin{align*}
dH(X,Y,Z)&=g((\nabla_{X}h)Y,Z)+g((\nabla_{Y}h)Z,X)+g((\nabla_{Z}h)X,Y)\\
&=3\left(\eta(X)g(\phi h Y,Z) +\eta(Y) g(\phi h Z,X)+\eta(Z)g(\phi h X,Y)\right)\\
&= 3\eta\wedge\Phi_1(X,Y,Z).
\end{align*}
Analogously, from \eqref{nablaphih-completo}, we have
\begin{align*}
g((\nabla_{X}\phi h)Y,Z) &= \eta(X)g(\phi h^2 Y-hY,Z) - \eta(Y) g(\phi h^2 X - h X,Z)\\&\quad + \eta(Z)g(\phi h^2X -h X,Y)\\
&= \eta(X)g(\phi h^2 Y,Z) +\eta(Y) g(\phi h^2 Z,X)+\eta(Z)g(\phi h^2 X,Y)\\
&\quad -\eta(X)g( hY,Z) -\eta(Y) g(hZ,X)-\eta(Z)g(h X,Y).
\end{align*}
Hence,
\begin{align*}
d\Phi_1(X,Y,Z)&=g((\nabla_{X}\phi h)Y,Z)+g((\nabla_{Y}\phi h)Z,X)+g((\nabla_{Z}\phi h)X,Y)\\
&=3\left(\eta(X)g(\phi h^2 Y,Z) +\eta(Y) g(\phi h^2 Z,X)+\eta(Z)g(\phi h^2 X,Y)\right)\\
&\quad -3\left(\eta(X)g( hY,Z)+ \eta(Y) g(hZ,X)+\eta(Z)g(h X,Y)\right)\\
&= 3\,\eta\wedge\Phi_2(X,Y,Z)-3\,\eta\wedge H(X,Y,Z).
\end{align*}

Now, from \eqref{dH} and \eqref{dPhi1}, we have
\[0=d^2H=3\,d\eta\wedge\Phi_1-3\eta\wedge d\Phi_1=3\,d\eta\wedge\Phi_1.\]
If we assume that the dimension of $M$ is $2n+1>5$,  $\eta$ being a contact form, the fact that $d\eta\wedge\Phi_1=0$ implies $\Phi_1=0$, by Proposition~\ref{L-injective}. Therefore $h=0$, and the structure is Sasakian.
\end{proof}

\section{Nearly cosymplectic manifolds}
In this section we will classify nearly cosymplectic manifolds of dimension higher than five. In the following, given a nearly cosymplectic manifold $(M,\phi,\xi,\eta,g)$, we shall denote by $h$ the operator defined in \eqref{nablaxi_c}.

\begin{proposition}
Let $(M,\phi,\xi,\eta,g)$ be a nearly cosymplectic manifold. Then $h=0$ if and only if $M$ is locally isometric to the Riemannian product $\mathbb{R}\times N$, where $N$ is a nearly K\"ahler manifold.
\end{proposition}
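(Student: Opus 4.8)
The plan is to establish the equivalence in both directions, using the structure equations for nearly cosymplectic manifolds recalled in Section 2, particularly \eqref{nablaxi_c}, \eqref{nablah_c}, and the behaviour of $\xi$.

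For the ``only if'' direction, assume $h=0$. Then \eqref{nablaxi_c} gives $\nabla_X\xi = 0$ for all $X$, so $\xi$ is a parallel vector field. By the de Rham decomposition theorem, $M$ is locally isometric to a Riemannian product $\mathbb{R}\times N$, where the $\mathbb{R}$ factor is tangent to $\xi$ and $N$ is an integral submanifold of the distribution $\mathcal{D}=\mathrm{Ker}(\eta)$, which is totally geodesic. Since $\xi$ is parallel and $\phi\xi=0$, $\eta\circ\phi=0$, the tensor $\phi$ preserves $\mathcal{D}$ and restricts to an almost complex structure $J$ on each leaf $N$, compatible with the induced metric. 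It remains to check that $(N,J)$ is nearly K\"ahler, i.e. $(\nabla^N_X J)X=0$ for $X$ tangent to $N$. This follows from the nearly cosymplectic condition \eqref{defncosy}: for $X\in\mathcal{D}$, the ambient covariant derivative $\nabla_X\phi$ restricted to $\mathcal{D}$ agrees with $\nabla^N_X J$ because the leaves are totally geodesic and $\xi$ is parallel (so the normal component carries no information), and \eqref{main_c} restricted to $\mathcal{D}$ is exactly the nearly K\"ahler condition.

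For the ``if'' direction, suppose $M$ is locally $\mathbb{R}\times N$ with $N$ nearly K\"ahler, the product metric, $\xi=\partial_t$, $\eta=dt$, and $\phi$ acting as the nearly K\"ahler structure $J$ on $N$ and annihilating $\xi$. One checks directly that $(\phi,\xi,\eta,g)$ is an almost contact metric structure and that \eqref{main_c} holds (it reduces to the nearly K\"ahler equation on $N$, the $\xi$-directions contributing nothing since the product is a metric product). Then from \eqref{nablaxi_c} and the fact that $\xi=\partial_t$ is parallel in a Riemannian product, $hX=\nabla_X\xi=0$ for all $X$, so $h=0$.

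The main obstacle, such as it is, is bookkeeping: one must be careful that the identification of $\nabla_X\phi$ on $\mathcal{D}$ with $\nabla^N_X J$ is legitimate, which hinges on the leaves of $\mathcal{D}$ being totally geodesic --- and this in turn follows from $\xi$ being parallel (hence $\mathcal{D}=\xi^\perp$ is a parallel distribution). Once that point is pinned down, both directions are immediate consequences of \eqref{nablaxi_c} and the de Rham theorem.
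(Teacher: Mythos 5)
Your proof is correct and takes essentially the same route as the paper: $h=0$ is equivalent to $\nabla\xi=0$, which yields the local splitting (the paper argues via $d\eta(X,Y)=2g(hX,Y)$ and checks directly that the leaves of $\mathrm{Ker}(\eta)$ are totally geodesic rather than invoking de Rham for the parallel field $\xi$, but this is the same idea), and the induced almost Hermitian structure on the leaves is nearly K\"ahler because the leaves are totally geodesic. The converse is handled identically in both arguments.
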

\begin{proof}
For every vector fields $X,Y$ we have
\begin{equation}\label{deta}
d\eta(X,Y)=g(\nabla_X\xi,Y)-g(\nabla_Y\xi,X)=2g(hX,Y).
\end{equation}
Therefore, if $h=0$ the distribution $\mathcal D=\mathrm{Ker}(\eta)$ is integrable. Denoting by $N$ an integral submanifold of ${\mathcal D}$, it is a totally geodesic hypersurface of $M$. Indeed,
for every $X,Y\in \mathcal{D}$, we have $g(\nabla_XY,\xi)=-g(Y,hX)=0$. Being also $\nabla_\xi\xi=0$, $M$ turns out to be locally isometric to the Riemannian product $\mathbb{R}\times N$. Further, the almost contact metric structure induces on $N$ an almost Hermitian structure which is nearly K\"ahler.

Conversely, if $M$ is locally isometric to the Riemannian product $\mathbb{R}\times N$, where $N$ is a nearly K\"ahler manifold, then $d\eta(X,Y)=0$ for all vector fields $X,Y$ orthogonal to $\xi$. By \eqref{deta} and $h\xi=0$, we deduce that $h=0$.
\end{proof}
\medskip

As a consequence of the above proposition, a nearly cosymplectic manifold $(M,\phi,\xi,\eta,g)$ is coK\"ahler if and only if $h=0$ and the leaves of the distribution $\mathcal D$ are K\"ahler manifolds.
Recall that $4$-dimensional nearly K\"ahler manifolds are K\"ahler (see \cite[Theorem 5.1]{Gray}), and this implies that if $M$ is a $5$-dimensional nearly cosymplectic manifold with $h=0$, then it is  a coK\"{a}hler manifold.\medskip

We shall now study  the spectrum of the symmetric operator $h^2$.

\begin{proposition}
The eigenvalues of the symmetric operator $h^2$ are constant.
\end{proposition}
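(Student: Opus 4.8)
The plan is to show that the power traces $\mathrm{tr}(h^{2k})$ are constant on $M$ for every $k\geq 1$, which generalises \eqref{tr}, and then to recover the eigenvalues of $h^{2}$ from these traces via Newton's identities.

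First I would compute the covariant derivative of $h^{2}$. Writing $h^{2}=h\circ h$, applying the Leibniz rule, and using \eqref{nablah_c} together with $h\xi=0$, $\eta\circ h=0$ and the skew-symmetry of $h$, one obtains
\[
(\nabla_{X}h^{2})Y=(\nabla_{X}h)(hY)+h\bigl((\nabla_{X}h)Y\bigr)=-g(h^{3}X,Y)\,\xi-\eta(Y)\,h^{3}X
\]
for all vector fields $X,Y$. The crucial feature of this identity is that, for fixed $X$, the image of the endomorphism $(\nabla_{X}h^{2})$ is contained in the span of $\xi$ and $h^{3}X$.

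Next, since $\nabla$ commutes with contractions one has $X\bigl(\mathrm{tr}(h^{2k})\bigr)=\mathrm{tr}(\nabla_{X}h^{2k})$, and the Leibniz rule gives $\nabla_{X}h^{2k}=\sum_{j=1}^{k}h^{2(j-1)}(\nabla_{X}h^{2})\,h^{2(k-j)}$; by the cyclic invariance of the trace this reduces to $X\bigl(\mathrm{tr}(h^{2k})\bigr)=k\,\mathrm{tr}\bigl((\nabla_{X}h^{2})\,h^{2(k-1)}\bigr)$. Evaluating the operator $(\nabla_{X}h^{2})\,h^{2(k-1)}$ on $Y$ by means of the displayed formula above, and using that $h^{2(k-1)}$ is symmetric while $h^{2k+1}X$ is orthogonal to $\xi$ (because $h\xi=0$), a short computation of the trace over a local orthonormal frame gives $\mathrm{tr}\bigl((\nabla_{X}h^{2})\,h^{2(k-1)}\bigr)=-2\,g(h^{2k+1}X,\xi)=0$. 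Hence $\mathrm{tr}(h^{2k})$ is constant for every $k\geq 1$.

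Finally, the eigenvalues $\mu_{1},\dots,\mu_{2n+1}$ of the symmetric operator $h^{2}$, counted with multiplicity, have constant power sums $p_{k}=\sum_{i}\mu_{i}^{k}=\mathrm{tr}(h^{2k})$; by Newton's identities all the elementary symmetric functions of the $\mu_{i}$ are then constant, so the characteristic polynomial of $h^{2}$ has constant coefficients, and therefore its roots --- the eigenvalues of $h^{2}$ --- are constant. The only step that requires any care is the vanishing of $\mathrm{tr}\bigl((\nabla_{X}h^{2})\,h^{2(k-1)}\bigr)$; it is forced by the degenerate form of $\nabla h^{2}$ together with $h\xi=0$ and $\eta\circ h=0$, and no further geometric information is needed.
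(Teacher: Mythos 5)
Your proof is correct, but it takes a genuinely different route from the paper's. Both arguments hinge on the same identity $(\nabla_Xh^{2})Y=-g(h^{3}X,Y)\xi-\eta(Y)h^{3}X$ (equivalently $g(X,h^{3}Y)\xi-\eta(Y)h^{3}X$), which you derive correctly from \eqref{nablah_c} using $h\xi=0$, $\eta\circ h=0$ and skew-symmetry. The paper then simply takes a local unit eigenvector field $Y$ orthogonal to $\xi$ with $h^{2}Y=\mu Y$ and reads off $X(\mu)\,g(Y,Y)=g((\nabla_Xh^{2})Y,Y)=0$, which is shorter. You instead show that every power trace $\mathrm{tr}(h^{2k})$ is constant and recover the eigenvalues via Newton's identities; your key computation $\mathrm{tr}\bigl((\nabla_Xh^{2})\,h^{2(k-1)}\bigr)=-2\,g(h^{2k+1}X,\xi)=0$ checks out, as does the reduction $X\bigl(\mathrm{tr}(h^{2k})\bigr)=k\,\mathrm{tr}\bigl((\nabla_Xh^{2})\,h^{2(k-1)}\bigr)$ by cyclicity. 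What your version buys is that it works entirely with manifestly smooth scalar invariants, so it never needs a smooth local eigenvector field attached to a fixed eigenvalue --- something the paper's proof quietly assumes and which strictly speaking requires a word of justification at points where eigenvalue multiplicities could a priori cross; it also generalizes \eqref{tr} to all even powers. The cost is the extra symmetric-function bookkeeping at the end; note that constancy of the characteristic polynomial yields constancy of the eigenvalues together with their multiplicities, which is exactly the statement needed.
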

\begin{proof}
From \eqref{nablah_c} it follows that
\begin{equation}\label{nabla_h2}
(\nabla_Xh^2)Y=g(X,h^3Y)\xi-\eta(Y)h^3X.
\end{equation}
Let us consider an eigenvalue $\mu$ of $h^2$ and a local unit vector field $Y$, orthogonal to $\xi$, such that $h^2Y=\mu Y$. Applying \eqref{nabla_h2} for any vector field $X$, we have
\begin{align*}
0&=g((\nabla_Xh^2)Y,Y)\\
&=g(\nabla_X(h^2Y),Y)-g(h^2(\nabla_XY),Y)\\
&=X(\mu)g(Y,Y)+\mu g(\nabla_XY,Y)-g(\nabla_XY,h^2Y)\\
&=X(\mu)g(Y,Y)
\end{align*}
which implies that $X(\mu)=0$.
\end{proof}
\bigskip

Since $h$ is skew-symmetric,
the non-vanishing eigenvalues of $h^2$ are negative. Therefore, the spectrum of $h^2$ is of type
\[\textrm{Spec}(h^2)=\{0,-\lambda_1^2,\ldots,-\lambda_r^2\},\]
where we can assume that each $\lambda_i$ is a positive real number and $\lambda_i\neq\lambda_j$ for $i\ne j$.
Notice that if $X$ is an eigenvector of $h^2$ with eigenvalue $-\lambda_i^2$, then $X$, $\phi X$, $hX$, $h\phi X$ are orthogonal eigenvectors of $h^2$ with eigenvalue $-\lambda_i^2$. Since $h(\xi)=0$, we get the eigenvalue $0$ has multiplicity $2p+1$ for some integer $p\geq0$.

We denote by  ${\mathcal D}(0)$ the distribution of the eigenvectors with eigenvalue $0$, and by ${\mathcal D}_0$ the distribution of the eigenvectors in  ${\mathcal D}(0)$ orthogonal to $\xi$, so that ${\mathcal D}(0)=[\xi]\oplus{\mathcal D}_0$. Let ${\mathcal D}(-\lambda_i^2)$ be the distribution of the eigenvectors with eigenvalue
$-\lambda_i^2$.  We remark that the distributions ${\mathcal D}_0$ and ${\mathcal D}(-\lambda_i^2)$ are $\phi$-invariant and $h$-invariant.

\begin{proposition}\label{foliations}
Let $(M,\phi,\xi,\eta,g)$ be a nearly cosymplectic manifold and let
$\mathrm{Spec}(h^2)=\{0,-\lambda_1^2,\ldots,-\lambda_r^2\}$ be the spectrum of $h^2$.
Then,
\begin{itemize}
\item[{$(a)$}] for each $i=1,\ldots, r$, the distribution $[\xi]\oplus\mathcal D(-\lambda_i^2)$ is integrable with totally geodesic leaves.
\end{itemize}
Assuming that the eigenvalue $0$ is not simple,
\begin{itemize}
\item[{$(b)$}] the distribution $\mathcal D_0$ is integrable with totally geodesic leaves, and each leaf of $\mathcal D_0$ is endowed with a nearly K\"ahler structure;
\item[{$(c)$}] the distribution $[\xi]\oplus\mathcal D(-\lambda_1^2)\oplus\ldots\oplus\mathcal D(-\lambda_r^2)$ is integrable with totally geodesic leaves.
\end{itemize}
\end{proposition}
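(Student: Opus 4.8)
The plan is to deduce everything from the single identity
\[
(\nabla_Xh^2)Y=g(X,h^3Y)\xi-\eta(Y)h^3X,
\]
i.e.\ \eqref{nabla_h2}, which is the nearly cosymplectic counterpart of \eqref{varie7} and plays here the role the analogous formula plays in the proof of Theorem~\ref{main1}. Recall that a distribution $\mathcal E$ on a Riemannian manifold has totally geodesic integral leaves exactly when $\nabla_XY\in\mathcal E$ for all local sections $X,Y$ of $\mathcal E$: the skew part of this condition is Frobenius integrability, the symmetric part is the vanishing of the second fundamental form. So in each case it suffices to check where the relevant covariant derivatives land, using that the $\lambda_i^2$ are constant, pairwise distinct and nonzero, that $h$ is skew-symmetric with $\ker h=\ker h^2=\mathcal D(0)$, and that $\mathcal D_0$ and each $\mathcal D(-\lambda_i^2)$ are $h$- and $\phi$-invariant. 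Note that a) needs no hypothesis on the multiplicity of $0$; the assumption in b)--c) only serves to make $\mathcal D_0$ nonzero.

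For a): fix $i$, let $Y$ be a local section of $\mathcal D(-\lambda_i^2)$ and $X$ an arbitrary vector field. Since $h^2Y=-\lambda_i^2Y$ with $\lambda_i^2$ constant, $\nabla_X(h^2Y)=-\lambda_i^2\nabla_XY$; since $h^3Y=-\lambda_i^2hY$ and $\eta(Y)=0$, \eqref{nabla_h2} reduces to
\[
(h^2+\lambda_i^2I)\,\nabla_XY=\lambda_i^2\,g(X,hY)\,\xi.
\]
Expanding $\nabla_XY$ over the eigenspaces of $h^2$ (with $\mathcal D(0)=[\xi]\oplus\mathcal D_0$): the right-hand side lies in $[\xi]$, and $h^2+\lambda_i^2I$ acts as multiplication by a nonzero scalar on $\mathcal D_0$ and on each $\mathcal D(-\lambda_j^2)$ with $j\ne i$, so the $\mathcal D_0$- and $\mathcal D(-\lambda_j^2)$-components of $\nabla_XY$ vanish; hence $\nabla_XY\in[\xi]\oplus\mathcal D(-\lambda_i^2)$. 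Combined with $\nabla_\xi\xi=0$ and, for $X$ a section of $\mathcal D(-\lambda_i^2)$, $\nabla_X\xi=hX\in\mathcal D(-\lambda_i^2)$, this gives $\nabla_XY\in[\xi]\oplus\mathcal D(-\lambda_i^2)$ for all sections $X,Y$ of that distribution, proving a).

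For b) and c) I would show that $\mathcal D_0$ is a \emph{parallel} distribution. If $W$ is a local section of $\mathcal D_0$ and $X$ is arbitrary, then $h^2W=h^3W=0$ and $\eta(W)=0$, so \eqref{nabla_h2} gives $h^2(\nabla_XW)=\nabla_X(h^2W)=0$, i.e.\ $\nabla_XW\in\ker h^2=[\xi]\oplus\mathcal D_0$; moreover $g(\nabla_XW,\xi)=-g(W,hX)=0$, using that $hX$ is orthogonal to $\ker h\supseteq\mathcal D_0$. Hence $\nabla_XW\in\mathcal D_0$, so $\mathcal D_0$, and therefore its orthogonal complement $[\xi]\oplus\mathcal D(-\lambda_1^2)\oplus\cdots\oplus\mathcal D(-\lambda_r^2)$, is parallel; a parallel distribution is integrable with totally geodesic leaves, which yields the integrability statements in b) and c). Finally, on a leaf $N$ of $\mathcal D_0$ the $\phi$-invariance of $\mathcal D_0$ together with $\phi^2=-I$ and $\eta=0$ on $\mathcal D_0$ give an almost Hermitian structure $(J,g|_N)$ with $J:=\phi|_{\mathcal D_0}$; since $N$ is totally geodesic its Levi-Civita connection is the restriction of $\nabla$, and for $X$ tangent to $N$
\[
(\nabla^N_XJ)X=\nabla_X(\phi X)-\phi(\nabla_XX)=(\nabla_X\phi)X=0
\]
by \eqref{main_c}, so $N$ is nearly K\"ahler, completing b).

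I expect the only delicate point to be the eigenspace-decomposition step in a), together with the observation that $[\xi]\oplus\mathcal D(-\lambda_i^2)$ is in general \emph{not} parallel (in contrast to $\mathcal D_0$): indeed $\nabla_X\xi=hX$ leaves this distribution as soon as $X$ has a component in some $\mathcal D(-\lambda_j^2)$ with $j\ne i$, which is precisely why there one obtains only a totally geodesic foliation. Everything else is a routine substitution into \eqref{nabla_h2}.
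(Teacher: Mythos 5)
Your argument is correct, and it rests on exactly the same key identity as the paper's proof, namely $(\nabla_Xh^2)Y=g(X,h^3Y)\xi-\eta(Y)h^3X$; the differences are in how the consequences are extracted. For a), the paper takes $X,Y\in\mathcal D(-\lambda_i^2)$, derives $h^2(\nabla_XY)=-\lambda_i^2\nabla_XY+\lambda_i^2g(X,hY)\xi$, and then applies $\phi^2$ to kill the $\xi$-term and conclude $\phi^2\nabla_XY\in\mathcal D(-\lambda_i^2)$; your version, solving $(h^2+\lambda_i^2I)\nabla_XY=\lambda_i^2g(X,hY)\xi$ eigenspace by eigenspace, is equivalent but proves the slightly stronger statement that $\nabla_XY\in[\xi]\oplus\mathcal D(-\lambda_i^2)$ for \emph{arbitrary} $X$ (which in particular absorbs the paper's separate remark that $\nabla_\xi h^2=0$ preserves the eigenspaces). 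The more substantive divergence is in b)--c): the paper proves b) only for $X,Y\in\mathcal D_0$ and then handles c) by a separate computation showing $g(\nabla_XY,Z)=0$ for $X\in\mathcal D(-\lambda_i^2)$, $Y\in\mathcal D(-\lambda_j^2)$, $Z\in\mathcal D_0$, whereas you show that $\mathcal D_0$ is \emph{parallel} (here the only extra ingredient is $g(W,hX)=-g(hW,X)=0$ for $W\in\mathcal D_0\subseteq\ker h$, valid for arbitrary $X$), so that c) follows at once by passing to the orthogonal complement. This is a genuine streamlining, and the parallelism of the two complementary distributions is in any case what the paper implicitly uses later to obtain the local Riemannian product decomposition. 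Your closing verification that the leaves of $\mathcal D_0$ are nearly K\"ahler via \eqref{main_c} matches the paper's (briefer) remark. No gaps.
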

\begin{proof}
Consider an eigenvector $X$ of $h^2$ with eigenvalue $-\lambda_i^2$. Then
$\nabla_X\xi=hX\in {\mathcal D}(-\lambda_i^2)$.
On the other hand, \eqref{nabla_h2} implies that $\nabla_\xi h^2=0$,
and thus $\nabla_\xi X$ is also an eigenvector with eigenvalue $-\lambda_i^2$.
Now, taking $X,Y\in {\mathcal D}(-\lambda_i^2)$ and applying \eqref{nabla_h2}, we get
\[h^2(\nabla_XY)=-\lambda_i^2\nabla_XY-(\nabla_Xh^2)Y=-\lambda_i^2\nabla_XY+\lambda_i^2 g(X,hY)\xi.\]
Therefore,
\[h^2(\phi^2\nabla_XY)=\phi^2(h^2\nabla_XY)=-\lambda_i^2\phi^2(\nabla_XY).\]
Thus $\phi^2\nabla_XY\in {\mathcal D}(-\lambda_i^2)$. 
It follows that $\nabla_XY=-\phi^2 \nabla_XY +\eta(\nabla_XY)\xi$ belongs to the distribution
$[\xi]\oplus {\mathcal D}(-\lambda_i^2)$. This proves $(a)$.

As regards $(b)$, applying again \eqref{nabla_h2}, we have $(\nabla_Xh^2)Y=0$ for every $X,Y\in {\mathcal D}_0$, so that $h^2(\nabla_XY)=0$.
Moreover,
\[g(\nabla_XY,\xi)=-g(Y,\nabla_X\xi)=-g(Y,hX)=0.\]
Hence, ${\mathcal D}_0$ is integrable with totally geodesic leaves. Since the leaves of ${\mathcal D}_0$ are $\phi$-invariant, the nearly cosymplectic structure induces a nearly K\"ahler structure on each integral submanifold of ${\mathcal D}_0$.

Finally, in order to prove $(c)$, owing to $(a)$,  we only have to show that
\[g(\nabla_XY,Z)=0\]
for every $X\in {\mathcal D}(-\lambda_i^2)$, $Y\in {\mathcal D}(-\lambda_j^2)$, $i\ne j$, and $Z\in {\mathcal D}_0$. In fact, from \eqref{nabla_h2}, we have
\begin{align*}
g(\nabla_XY,Z)&=-\frac{1}{\lambda_j^2}g(\nabla_X(h^2Y),Z)\\
&=-\frac{1}{\lambda_j^2}g((\nabla_Xh^2)Y+h^2(\nabla_XY),Z)\\
&=-\frac{1}{\lambda_j^2}\eta(Z)g(X,h^3Y)-\frac{1}{\lambda_j^2}g(\nabla_XY,h^2Z)
\end{align*}
which vanishes since $\eta(Z)=0$ and $h^2Z=0$.
\end{proof}

\begin{theorem}\label{dim5}
Let $(M,\phi,\xi,\eta,g)$ be a nearly cosymplectic manifold such that $0$ is a simple eigenvalue of $h^2$. Then $M$ is a $5$-dimensional manifold.
\end{theorem}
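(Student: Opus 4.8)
The plan is to transfer to the nearly cosymplectic setting the method of the proof of Theorem~\ref{nS-Sasaki}: I would use the simplicity of the eigenvalue $0$ to produce simultaneously a contact form and a closed-form expression for $\nabla\phi$, compute the exterior derivatives of the fundamental $2$-form and of an auxiliary $2$-form, and then invoke Proposition~\ref{L-injective} to force $h$ to vanish as soon as $\dim M>5$.

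First I would unwind the hypothesis. If $0$ is a simple eigenvalue of $h^2$ then, in the notation introduced above, $\mathcal D_0=0$, so $\mathrm{Ker}(\eta)=\mathcal D(-\lambda_1^2)\oplus\cdots\oplus\mathcal D(-\lambda_r^2)$ and $h$ restricts to an invertible endomorphism of $\mathrm{Ker}(\eta)$ (on each summand $h^2=-\lambda_i^2\,\Id$, which is invertible). Since $d\eta(X,Y)=2g(hX,Y)$ by \eqref{deta} and $\mathrm{Ker}(\eta)=\xi^{\perp}$, the $2$-form $d\eta$ is nondegenerate on $\mathrm{Ker}(\eta)$, so $\eta$ is a contact form and $(M,\eta)$ is a contact manifold of dimension $2n+1$. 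Next, \eqref{nablaphi_hc} gives $g((\nabla_X\phi)Y,hZ)=0$ for all $X,Y\perp\xi$ and all $Z$; since $h$ maps $TM$ onto $\mathrm{Ker}(\eta)$, this forces $(\nabla_X\phi)Y\in[\xi]$ whenever $X,Y\perp\xi$, and computing its $\xi$-component from $(\nabla_X\phi)\xi=-\phi\nabla_X\xi=-\phi hX$, together with $\nabla_\xi\phi=\phi h$ and $h\xi=0$, I obtain, after decomposing vectors along $[\xi]\oplus\mathrm{Ker}(\eta)$, the nearly cosymplectic analogue of \eqref{nablaphi-completo},
\[(\nabla_X\phi)Y=\eta(X)\phi h Y-\eta(Y)\phi h X+g(\phi h X,Y)\,\xi\]
(and I would check it is indeed skew in $X,Y$). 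Using this identity, \eqref{nablah_c}, and $\nabla_\xi h=0$ (the case $X=\xi$ of \eqref{nablah_c}), the same bookkeeping as in the proof of Theorem~\ref{nS-Sasaki}---now with the ``Sasakian'' terms absent---yields, for $\Phi(X,Y)=g(\phi X,Y)$ and $\Phi_k(X,Y)=g(\phi h^k X,Y)$, $k=1,2$,
\[d\Phi=3\,\eta\wedge\Phi_1,\qquad d\Phi_1=3\,\eta\wedge\Phi_2;\]
the terms of $d\Phi$ and $d\Phi_1$ with no $\eta$ factor drop out precisely because $(\nabla_X\phi)Y$ is proportional to $\xi$ on $\mathrm{Ker}(\eta)$.

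Then I would apply $d$ to the first relation and substitute the second: $0=d^2\Phi=3\,d\eta\wedge\Phi_1-3\,\eta\wedge d\Phi_1=3\,d\eta\wedge\Phi_1$, so $\eps_{d\eta}\Phi_1=d\eta\wedge\Phi_1=0$. If $\dim M=2n+1>5$, that is $n\ge 3$, Proposition~\ref{L-injective} forces $\Phi_1=0$, hence $\phi h=0$ and therefore $h=0$; but then $h^2\equiv 0$, so $0$ is an eigenvalue of $h^2$ of multiplicity $2n+1>1$, contradicting the hypothesis. Hence $\dim M\le 5$. Since $0$ is simple and each negative eigenvalue of $h^2$ occupies a subspace of dimension a multiple of four, $\dim M-1$ is a multiple of four; as $h\not\equiv 0$ (otherwise $\dim M=1$) and $\dim M\le 5$, we conclude $\dim M-1=4$, i.e.\ $\dim M=5$.

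The main obstacle is the middle paragraph: verifying the closed formula for $\nabla\phi$ and the two identities for $d\Phi$ and $d\Phi_1$. Everything there is mechanical once one knows that $(\nabla_X\phi)Y\in[\xi]$ for $X,Y\perp\xi$---which is exactly where the simplicity of the eigenvalue $0$ (equivalently, the invertibility of $h$ on $\mathrm{Ker}\eta$) is used---and the only ingredient imported from elsewhere is Proposition~\ref{L-injective}, which applies because $\eta$ has been shown to be a contact form.
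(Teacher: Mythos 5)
Your argument is correct, and its skeleton coincides with the paper's: the closed formula for $\nabla\phi$ extracted from \eqref{nablaphi_hc} by using that $h$ maps onto $\mathrm{Ker}(\eta)$ (which is exactly where simplicity of the eigenvalue $0$ enters), the identities $d\Phi=3\,\eta\wedge\Phi_1$ and $d\Phi_1=3\,\eta\wedge\Phi_2$, and the combination $0=d^{2}\Phi=3\,d\eta\wedge\Phi_1$ with Proposition~\ref{L-injective} to force $h=0$ when $n\geq 3$. Where you genuinely diverge is in proving that $\eta$ is a contact form. The paper does this in two stages: when $\mathrm{Spec}(h^2)=\{0,-\lambda^2\}$ it deforms the structure into the contact metric structure $\bigl(-\tfrac{1}{\lambda}h,\tfrac{1}{\lambda}\xi,\lambda\eta,\lambda^2 g\bigr)$, and in general it restricts to the totally geodesic leaves of $[\xi]\oplus\mathcal D(-\lambda_i^2)$ provided by Proposition~\ref{foliations}, shows $\eta$ is contact on each $5$-dimensional leaf, and reassembles $\eta\wedge(d\eta)^{n}\neq 0$ on $M$. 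You instead observe directly that $d\eta(X,Y)=2g(hX,Y)$ by \eqref{deta} and that simplicity of $0$ makes $h$ invertible on $\mathrm{Ker}(\eta)$, so $d\eta$ is nondegenerate there; this is shorter, and it uses the hypothesis in the same single place where it is already needed for the $\nabla\phi$ formula. What the paper's longer route buys is the explicit associated contact metric structure and the leaf-by-leaf multiplicity-$4$ statement; what yours buys is economy. Your concluding count is also sound: each $\mathcal D(-\lambda_i^2)$ contains the orthogonal quadruple $X,\phi X,hX,h\phi X$ and is $\phi$- and $h$-invariant, so $\dim M-1$ is a positive multiple of $4$, which together with $\dim M\leq 5$ rules out dimension $3$ and gives $\dim M=5$ (the degenerate possibility $h\equiv 0$, hence $\dim M=1$, is excluded exactly as you note).
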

\begin{proof}
First we show that
\begin{align}
(\nabla_X\phi)Y&=g(\phi hX,Y)\xi+\eta(X)\phi hY-\eta(Y)\phi hX\label{nablaphi_nc},\\
(\nabla_X\phi h)Y&=g(\phi h^2X,Y)\xi+\eta(X)\phi h^2Y-\eta(Y)\phi h^2X\label{nablaphih_nc}
\end{align}
for all vector fields $X$ and $Y$.
Applying \eqref{nablaxi_c} we have
\begin{equation*}
g((\nabla_X\phi)Y,\xi)=-g(Y,(\nabla_X\phi)\xi)=g(Y,\phi\nabla_X\xi)= g(Y,\phi hX).
\end{equation*}
Taking a vector field $U$ orthogonal to $\xi$, then $U=hZ$ for some vector field $Z$.
Then, by applying \eqref{nablaphi_hc} and recalling that $\phi$ anticommutes with $h$, we get 
\begin{align*}
g((\nabla_X\phi)Y,U)&= \eta(Y)g(h^2X,\phi Z)-\eta(X)g(h^2Y,\phi Z)\\
&= \eta(Y)g(hX,\phi hZ)-\eta(X)g(hY,\phi hZ)\\
&= {}-\eta(Y)g(\phi hX,U)+\eta(X)g(\phi hY,U)
\end{align*}
which completes the proof of \eqref{nablaphi_nc}. From \eqref{nablah_c} and \eqref{nablaphi_nc} we easily get \eqref{nablaphih_nc}.

We consider now the $2$-forms $\Phi_k$, $k=0,1,2$, defined by 
\[\Phi_k(X,Y)=g(\phi h^k X,Y).\]
In particular, $\Phi_0=-\Phi$.
We prove that
\begin{equation}\label{differentials}
d\Phi_0=3\eta\wedge\Phi_1,\qquad d\Phi_1=3\eta\wedge\Phi_2.
\end{equation}
From \eqref{nablaphi_nc}, for all vector fields $X,Y,Z$ we have
\[g((\nabla_X\phi)Y,Z)=\eta(X)g(\phi hY,Z)+\eta(Y)g(\phi hZ,X)+\eta(Z)g(\phi hX,Y),\]
which implies that
$d\Phi_0=3\eta\wedge\Phi_1.$
Analogously, from \eqref{nablaphih_nc}, we have
\[g((\nabla_X\phi h)Y,Z)=\eta(X)g(\phi h^2Y,Z)+\eta(Y)g(\phi h^2Z,X)+\eta(Z)g(\phi h^2X,Y),\]
so that $d\Phi_1=3\eta\wedge\Phi_2.$
From \eqref{differentials},
\[0=d^2\Phi_0=3\,d\eta\wedge\Phi_1-3\eta\wedge d\Phi_1=3\,d\eta\wedge\Phi_1.\]

Next we show that if $0$ is a simple eigenvalue, then $\eta$ is a contact form. This, by an argument similar to the one in the
proof of Theorem~\ref{nS-Sasaki} will imply that $\dim M=5$. 


First we assume that $\textrm{Spec}(h^2)=\{0,-\lambda^2\}$, with $\lambda>0$, $0$ being a simple eigenvalue. This is equivalent to require that
\[h^2=-\lambda^2(I-\eta\otimes\xi).\]
Let us take the tensor fields
\[\tilde\phi=-\frac{1}{\lambda}h,\quad \tilde\xi=\frac{1}{\lambda}\xi,\quad\tilde\eta=\lambda\eta,\quad\tilde g=\lambda^2 g.\]
One can verify that $(\tilde\phi, \tilde\xi, \tilde\eta, \tilde g)$ is an almost contact metric structure. Moreover, from \eqref{deta} we have
\[d\tilde\eta(X,Y)=2\lambda g(hX,Y)=\frac{2}{\lambda}\,\tilde g(hX,Y)=2\,\tilde g(X,-\frac{1}{\lambda}hY)=2\,\tilde g(X,\tilde \phi Y).\]
Therefore $(\tilde\phi, \tilde\xi, \tilde\eta, \tilde g)$ is a contact metric structure. In particular, both the forms $\tilde\eta$ and $\eta$ are contact forms. Hence, in this case $M$ is a $5$-dimensional manifold and the multiplicity of the eigenvalue $-\lambda^2$ is $4$.

We assume now that
\[\textrm{Spec}(h^2)=\{0,-\lambda_1^2,\ldots,-\lambda_r^2\},\]
 where $\lambda_i$ is a positive real number and $\lambda_i\neq\lambda_j$ for $i\ne j$. From Proposition \ref{foliations}, we know that for each $i=1,\ldots, r$, the distribution $[\xi]\oplus{\mathcal D}(-\lambda_i^2)$ is integrable with totally geodesic leaves. Each integral submanifold of this distribution is  endowed with an induced almost contact metric structure, here again denoted by $(\phi,\xi,\eta,g)$, whose structure tensor field $h$ satisfies
 \[h^2=-\lambda_i^2(I-\eta\otimes\xi).\]
 We deduce that $\eta$ is a contact form on the leaves of the distribution. In particular, each eigenvalue $-\lambda_i^2$ of $h^2$ has multiplicity $4$.

Notice that, taking two distinct eigenvalues  $-\lambda_{i}^{2}$ and $-\lambda_{j}^{2}$, for every $X\in{\mathcal D}(-\lambda_{i}^{2})$ and $Y\in{\mathcal D}(-\lambda_{j}^{2})$, we have
\begin{equation}\label{contatto1}
d\eta(X,Y)=2g(hX,Y)=0,
\end{equation}
since the operator $h$ preserves the distributions  ${\mathcal D}(-\lambda_{i}^2)$ and ${\mathcal D}(-\lambda_{j}^2)$, which are mutually orthogonal.

Now, fix a point $x \in M$.
 Since $\eta$ is a contact form on the leaves of each distribution  $[\xi]\oplus {\mathcal D}(-\lambda_{i}^2)$, for any $i\in\left\{1,\ldots,r\right\}$  one can find a basis \ $(v_{1}^{i}, v_{2}^{i}, v_{3}^{i}, v_{4}^{i})$ of ${\mathcal D}_{x}(-\lambda_{i}^{2})$ such that
\begin{equation}\label{contatto3}
\eta\wedge (d\eta)^{2}(\xi_{x}, v_{1}^{i}, v_{2}^{i}, v_{3}^{i}, v_{4}^{i})\neq 0.
\end{equation}
Therefore, putting $n=2r$, the dimension of $M$ is $2n+1$ and
\begin{align*}
\eta \wedge (d\eta)^{n} & \left(  \xi_{x},  v_{1}^{1}, v_{2}^{1}, v_{3}^{1}, v_{4}^{1}, \ldots, v_{1}^{r}, v_{2}^{r}, v_{3}^{r}, v_{4}^{r}\right) \\
&= \eta(\xi_{x})  (d\eta)^{2}(v_{1}^{1}, v_{2}^{1}, v_{3}^{1}, v_{4}^{1}) \ldots (d\eta)^{2}(v_{1}^{r}, v_{2}^{r}, v_{3}^{r}, v_{4}^{r}) \neq 0.
\end{align*}
This proves that $\eta$ is a contact form.
\end{proof}

\begin{theorem}\label{above}
Let $(M,\phi,\xi,\eta,g)$ be a nearly cosymplectic non-coK\"ahler manifold of dimension $2n+1>5$. Then $M$ is locally isometric to one of the following Riemannian products:
\[\mathbb{R}\times N^{2n}, \qquad M^5\times N^{2n-4},\]
where $N^{2n}$ is a nearly K\"ahler non-K\"ahler manifold,  $N^{2n-4}$ is a nearly K\"ahler manifold, and $M^5$ is a nearly cosymplectic non-coK\"ahler manifold.
\end{theorem}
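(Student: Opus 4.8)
The plan is to reduce, via Theorem~\ref{dim5}, to the case in which $0$ is a non-simple eigenvalue of $h^2$, and then to read off the product decomposition from the two complementary totally geodesic foliations furnished by Proposition~\ref{foliations}. Since $\dim M = 2n+1 > 5$, Theorem~\ref{dim5} forbids $0$ from being a simple eigenvalue of $h^2$; hence its multiplicity is $2p+1$ with $p\ge 1$, so that the distribution $\mathcal D_0$ is nonzero.

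First I would dispose of the case $h=0$. By the first Proposition of this section, $M$ is locally isometric to $\mathbb R \times N$ with $N$ nearly K\"ahler and $\dim N = 2n$. If $N$ were K\"ahler, then by the remark following that proposition $M$ would be coK\"ahler, contrary to hypothesis; hence $N = N^{2n}$ is nearly K\"ahler non-K\"ahler, and $M$ is of the first type.

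Assume now $h \ne 0$, so $\mathrm{Spec}(h^2)$ has at least one negative eigenvalue; put $\overline{\mathcal D} = [\xi]\oplus\mathcal D(-\lambda_1^2)\oplus\cdots\oplus\mathcal D(-\lambda_r^2)$. By Proposition~\ref{foliations}, both $\mathcal D_0$ and $\overline{\mathcal D}$ are integrable with totally geodesic leaves; they are mutually orthogonal, $TM = \mathcal D_0 \oplus \overline{\mathcal D}$, and $\mathcal D_0 \ne 0$. Two complementary orthogonal distributions whose leaves are totally geodesic are both parallel, so $M$ is locally isometric to a Riemannian product $L_0 \times \overline L$, where $L_0$ is a leaf of $\mathcal D_0$ and $\overline L$ a leaf of $\overline{\mathcal D}$. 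Since $\mathcal D_0$ is $\phi$-invariant, $L_0$ carries an induced nearly K\"ahler structure by Proposition~\ref{foliations}(b); since $\overline{\mathcal D}$ is $\phi$-invariant and contains $\xi$, and its leaves are totally geodesic, $\overline L$ inherits an almost contact metric structure which is again nearly cosymplectic, with operator the restriction of $h$. On $\overline L$ the induced $h^2$ has $0$ as a simple eigenvalue (its kernel being exactly $[\xi]$), so Theorem~\ref{dim5} forces $\dim \overline L = 5$. Consequently $r = 1$, every negative eigenvalue of $h^2$ has multiplicity $4$, and $\dim L_0 = (2n+1)-5 = 2n-4$. Finally $\overline L = M^5$ has $h \ne 0$ (it contains $\mathcal D(-\lambda_1^2)$), and a $5$-dimensional nearly cosymplectic manifold with non-vanishing $h$ is non-coK\"ahler by the remark above; setting $N^{2n-4} = L_0$ gives the second type.

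The main obstacle is the step turning the pair of complementary orthogonal totally geodesic foliations into an honest local Riemannian product that respects the whole structure: one must verify that $g$, $\phi$, $\eta$ and the Levi-Civita connection all split along $TM = \mathcal D_0 \oplus \overline{\mathcal D}$, so that the induced data on the two factors are exactly a nearly K\"ahler and a nearly cosymplectic structure of the claimed dimensions. This is precisely where the totally geodesic property (ensuring the induced connections are restrictions of $\nabla$) and the $\phi$- and $h$-invariance of the eigendistributions are used in an essential way; a minor companion point is to confirm that the structure induced on a leaf of $\overline{\mathcal D}$ does satisfy $(\nabla_X\phi)X=0$, so that Theorem~\ref{dim5} may legitimately be applied to it.
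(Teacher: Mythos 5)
Your proposal is correct and follows essentially the same route as the paper: dispose of the case $h=0$ via the first proposition of the section, and for $h\neq 0$ use Theorem~\ref{dim5} to rule out $0$ being simple, split $TM=\mathcal D_0\oplus\overline{\mathcal D}$ by Proposition~\ref{foliations}~b) and~c) into a local Riemannian product, and apply Theorem~\ref{dim5} again to the leaf of $\overline{\mathcal D}$ (where $0$ is simple for the induced $h^2$) to force it to be $5$-dimensional. Your additional remarks on why the product respects the structures and why $M^5$ is non-coK\"ahler are points the paper leaves implicit, but they do not change the argument.
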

\begin{proof}
If $h=0$, then $M$ is locally isometric to the Riemannian product $\mathbb{R}\times N^{2n}$, where $N^{2n}$ is a  nearly K\"ahler non-K\"ahler manifold.

If $h\ne0$, then $h^2$ admits non vanishing eigenvalues and we can assume $\textrm{Spec}(h^2)=\{0,-\lambda_1^2,\ldots,-\lambda_r^2\}$,  where each $\lambda_i$ is a positive real number. Since $\dim M>5$, owing to Theorem \ref{dim5}, the eigenvalue $0$ is not a simple eigenvalue. From b) and c) of Proposition \ref{foliations}, $M$ is locally isometric to the Riemannian product $M'\times N$, where $M'$ is an integral submanifold of the distribution $[\xi]\oplus\mathcal D(-\lambda_1^2)\oplus\ldots\oplus\mathcal D(-\lambda_r^2)$, and $N$ is an integral submanifold of ${\mathcal D}_0$, which is endowed with a nearly K\"ahler structure. Now, $M'$ is endowed with an induced nearly cosymplectic structure for which $0$ is a simple eigenvalue of the operator $h^2$. Therefore, by Theorem \ref{dim5}, we have that $\lambda_1=\ldots=\lambda_r$ and $M'$ is a $5$-dimensional nearly cosymplectic non-coK\"ahler manifold. Consequently, the dimension of $N$ is $2n-4$.
\end{proof}

\begin{remark}
Note that if the manifold $M$ in Theorem~\ref{above} is assumed to be complete and simply connected, then, by the de Rham decomposition theorem, the isometry becomes global as the involved distributions are  parallel with respect to the Levi-Civita connection. Note also that the nearly K\"ahler factor can be further decomposed. See Theorem~1.1 and Proposition~2.1 in \cite{Nagy} for details.
\end{remark}

\end{document}